\def\f#1#2{\frac{#1}{#2}}
\def\pa{\partial}
\def\n{\nabla}
\def\a{\alpha}
\def\b{\beta}
\def\ga{\gamma}
\def\({\left (}
\def\){\right )}
\def\<{\langle}
\def\>{\rangle}
\def\H{\mathbb{H}^n}
\newcommand{\bel}[1]{\begin{equation}\label{#1}}
\newcommand{\lab}[1]{\label{#1}}
\newcommand{\be}{\begin{equation}}
  \newcommand{\beq}{\begin{equation}}
\newcommand{\ba}{\begin{eqnarray}}
\newcommand{\ea}{\end{eqnarray}}
\newcommand{\rf}[1]{(\ref{#1})}
\newcommand{\qe}{\end{equation}}
\newcommand{\eeq}{\end{equation}}
\newtheorem{thm}{Theorem}[section]
\newtheorem{cor}[thm]{Corollary}
\newtheorem{lem}[thm]{Lemma}
\newtheorem{prop}[thm]{Proposition}
\newtheorem{defn}[thm]{Definition}
\newtheorem{rem}[thm]{Remark}
\newtheorem{claim}{Claim}[section]
\newtheorem*{acknowledgement*}{Acknowledgement}
\newtheorem{eg}{Example}[section]
\newcommand{\norm}[1]{\left\Vert#1\right\Vert}
\newcommand{\abs}[1]{\left\vert#1\right\vert}
\newcommand{\set}[1]{\left\{#1\right\}}
\newcommand{\real}{\mathbb R^n}
\newcommand{\R}{\mathbb R}
\newcommand{\eps}{\varepsilon}
\newcommand{\cta}{\theta}
\newcommand{\Om}{\Omega}
\newcommand{\om}{\omega}
\newcommand{\ho}{H\"ormander}
\newcommand{\half}{\frac{1}{2}}
\newcommand{\foo}[1]{\footnote{#1}}
\title{Lipschitz regularity of sub-elliptic harmonic maps into cat(0) space}
\keywords{Sub-Riemannian manifolds; Harmonic maps; Lipschitz continuity}
\author{Renan Assimos, Yaoting Gui, Jürgen Jost}
\address{Renan Assimos, Faculty of Mathematics and Physics, Leibniz Universit\"at Hannover. }
\address{Yaoting Gui, Beijing International Center for Mathematical Research, Peking University. }
\address{Jürgen Jost, Max Planck Institute for Mathematics in the Sciences, Leipzig.}
\email{renan.assimos@math.uni-hannover.de, ytgui@bicmr.pku.edu.cn, jjost@mis.mpg.de}
\begin{document}
\begin{abstract}
  We prove the local Lipschitz continuity of sub-elliptic harmonic maps between certain singular spaces, more specifically from the
  $n$-dimensional Heisenberg group into $CAT(0)$ spaces. Our main theorem establishes that these maps have the desired Lipschitz regularity, extending the Hölder regularity in this setting proven by Gui et. al in \cite{gui2022subelliptic} and obtaining same regularity as in H-C. Zhang and X-P. Zhu \cite{zhang2018lipschitz} for certain sub-Riemannian geometries, see also \cite{mondino2022lipschitz} and \cite{gigli2022regularity} for generalisations to RCD spaces. The present result paves the way for a general
  regularity theory of sub-elliptic harmonic maps, providing a versatile approach applicable beyond the Heisenberg group.
\end{abstract}

\maketitle
%===================================================================================================
\numberwithin{equation}{section}
\section{\bf Introduction}

Sub-Riemannian manifolds are a class of non-trivial Riemannian like geometries that have found profound applications in mathematics and physics. A sub-Riemannian space is represented as a pair $(M, H)$, where $M$ is a connected smooth manifold, $H$ is a sub-bundle of the tangent bundle $TM$ equipped with a smooth metric $g_H$, and $H$ satisfies the  Hörmander condition, meaning that despite being only a sub-bundle of $TM$, the vector fields therein and their brackets still generate the whole tangent bundle. In particular, a Laplace type operator can be defined, which is hypo-elliptic and hence still possesses regularity properties similar to ordinary Laplace operators. Developing further the fundamental contributions of Hörmander \cite{hormander1967hypoelliptic}, a general theory for such linear partial differential equations (PDEs) has emerged. Variational problems, however, often lead to non-linear systems. Here, we study a model problem and 
	 explore non-linear sytems of hypo-elliptic partial differential equations  concerning harmonic maps defined over a sub-Riemannian manifold and mapping into a locally compact metric space in the sense of Alexandrov, or $CAT(0)$. 
	
	Harmonic maps into singular spaces first occurred in the work of  Gromov and Schoen \cite{gromov1992harmonic} who studied such maps into Euclidean Bruhat-Tits buildings. These consist of branching flat spaces and include  metric trees.  Because of that branching, they have non-positive curvature in the sense of Alexandrov. The general theory of harmonic maps into such spaces started with the works of Jost \cite{jost1994equilibrium} and  Korevaar and Schoen \cite{korevaar1993sobolev}. Since at  that time, there was no general notion of differentiability of maps into such spaces, the energy $E(u)=\int_M \|du\|^2 dvol_M(x)$ for a mapping $u:M\to Y$ between smooth Riemannian manifolds ($du$ is the differential of $u$, and $ \|du\|^2$, where the norm comes from the Riemannian metrics, is called the energy density) has to be replaced by an expression involving only distances in the target, like
        \begin{equation}
          \label{1}
  E_\epsilon(u)=\frac{1}{c_\epsilon}\int_M
  \int_{B(x,\epsilon)}d_Y^2(u(x),u(y))\ d\operatorname{vol}(y)\ d\operatorname{vol}(x)
        \end{equation}
  where $d_Y(.,.)$ is the distance function of the target $Y$ and  $B(x,\epsilon)$ is a ball in the domain $M$ with center $x$ and
radius $\epsilon$. In the expression above, $c_\epsilon$ is a
normalization constant that is chosen so that in smooth situations we
have
\begin{equation}\label{2}
\lim_{\epsilon \to 0}E_\epsilon(u)=E(u).
\end{equation}
(We assume here, in accordance with the setting of this paper that $M$ is still a smooth manifold, although generalizations to more general domains are natural and possible.)
 Korevaar and Schoen \cite{korevaar1993sobolev}  defined such quantities as measures, and this was further investigated in \cite{guo2021harmonic}. This will also be useful in our context.  We describe it here. Consider two metric spaces $\left(M, d_M\right)$ and $\left(Y, d_Y\right)$, with $\Omega \subset M$ a domain equipped with a Radon volume measure, denoted by vol (note that in our context of sub-Riemannian geometry, such a volume measure in general is  not canonically defined, an issue that will be addressed below). For a given  $\epsilon>0$, along with a Borel measurable map $u: \Omega \rightarrow Y$, an approximating energy functional $E_{\epsilon}^u$ is defined on $C_0(\Omega)$ by
	
	$$
	E_{\epsilon}^u(\phi):=c(n) \int_{\Omega} \phi(x) \int_{B_x(\epsilon) \cap \Omega} \frac{d_Y^2(u(x), u(y))}{\epsilon^{n+2}} d \operatorname{vol}(y) d \operatorname{vol}(x).
	$$
        where $n$ is the dimension of $M$. 
	The approximate energy functional $E_{\eps}^u$ can be shown to converge to an energy functional $E^u$ as $\eps\rightarrow0$ in an almost monotonic way, and hence defines an energy for a general map. This is the case, for example for the Heisenberg group.
	When the domain is a sub-Riemannian manifold, minimizing maps of the limiting energy functional $E^u(\phi)$ are called sub-elliptic harmonic maps.
	
	As in the classical case, the regularity theory for such harmonic maps from a domain in an Alexandrov space with curvature bounded from below into a complete $CAT(0)$, i.e., a length space of non-positive curvature in the sense of Alexandrov,  
	is an intriguing question.  J. Jost \cite{jost1997generalized} and F.H. Lin \cite{lin1997analysis} established the Hölder regularity for such maps.
	In a seminal paper,  H-C. Zhang and X-P. Zhu\cite{zhang2018lipschitz} then  proved the Lipschitz regularity for Alexandrov spaces with generalised sectional curvature lower bounds. The Lipschitz continuity of harmonic maps plays a central role in establishing rigidity theorems within the realm of geometric group theory. Recently, the Lipschitz regularity of the harmonic maps has been extended to  RCD spaces by A. Mondino and D. Semola \cite{mondino2022lipschitz}, and shortly after also by N.Gigli \cite{gigli2022regularity}. We thus have  a full answer to the regulairty problem of  harmonic maps between singular spaces when the target has non-positive Alexandrov curvature.
	
	In the sub-Riemannian world, there is one extra difficulty. Since the metric is only defined on the sub-bundle $H$, there is no canonical volume measure on $M$. The theory of sub-elliptic harmonic maps started with the work of J. Jost and C-J. Xu \cite{jost1998subelliptic}. To overcome this issue, they simply take an arbitrary Riemannian metric $g$ and endow the bundle complementary to $H$ with the restriction of $g$, then this bundle is declared to be orthogonal to $H$. This procedure yields a measure and a volume form, albeit it is important to note that the choice of the metric $g$ is arbitrary and does not relate to the underlying sub-Riemannian structure.
	
	Despite the initial difficulty of not having a canonical way to associate a volume measure to the sub-Riemannian setting, it was shown in the paper J. Jost- C-J. Xu \cite{jost1998subelliptic} that many crucial results from quasi-linear and nonlinear elliptic PDEs can be extended to the hypo-elliptic case, demonstrating the profound interplay between sub-Riemannian geometry and PDEs. More specifically, they obtained an existence and regularity theorem for these sub-elliptic harmonic maps defined on a domain of $\mathbb{R}^n$ taking values into a Riemannian manifold under a Dirichlet condition on the domain and a convexity condition on the target. This convexity assumption appeared first in the work  of S. Hildebrandt, H. Kaul and K-O. Widman \cite{hildebrandt1977existence} on the images of classical harmonic maps into smooth targets without non-positive curvature hypothesis.
	
	A natural question now is  the regularity of sub-elliptic harmonic maps defined on sub-Riemannian manifolds when the target is a singular $CAT(0)$ space. The Hölder continuity of such maps has been recently obtained by Y.T.Gui et al.  \cite{gui2022subelliptic} and their main theorem is as follows:
	
	\begin{thm}
		Let $u:M\rightarrow N$ be a harmonic map from a sub-Riemannian manifold $M$
		into a locally compact metric $CAT(0)$ space $N$. Then $u$ is H\"older continuous.
	\end{thm}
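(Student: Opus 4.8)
The plan is to combine the convexity of the squared distance function on the $CAT(0)$ target with the De Giorgi--Nash--Moser machinery available for the sub-Laplacian $\mathcal{L} = \sum_i X_i^2$ associated to the horizontal frame on $M$ (on the Heisenberg group $\H$, the $X_i$ are the standard horizontal left-invariant vector fields). The first and conceptually decisive step is to replace the Bochner formula, which is unavailable since $N$ is singular, by a distributional subsolution property. For a fixed point $Q \in N$, set $f_Q(x) := \tfrac12 d_N^2(u(x), Q)$. Using the minimality of $u$ together with the $CAT(0)$ (NPC) inequality
\begin{equation*}
d_N^2(\gamma(t), Q) \leq (1-t)\, d_N^2(\gamma(0), Q) + t\, d_N^2(\gamma(1), Q) - t(1-t)\, d_N^2(\gamma(0), \gamma(1))
\end{equation*}
for geodesics $\gamma$ in $N$, I would test the energy against the geodesic homotopy that pulls $u$ toward $Q$ and show that $f_Q$ is a weak subsolution, $\mathcal{L} f_Q \geq |\nabla u|^2 \geq 0$, where $|\nabla u|^2$ denotes the Korevaar--Schoen energy density.

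The second step is the sub-elliptic potential theory. Because the $X_i$ satisfy the Hörmander condition, the Carnot--Carathéodory balls are doubling and support a Poincaré inequality (Jerison), so the full De Giorgi--Nash--Moser theory applies to $\mathcal{L}$. Applying the local maximum principle to the subsolution $f_Q$ yields a sub-mean-value inequality of the form $\sup_{B_{r/2}} f_Q \leq C \fint_{B_r} f_Q \, d\mathrm{vol}$ on CC balls. Choosing $Q$ to be the barycenter (center of mass) of $u_*(\mathrm{vol}\,\llcorner B_r)$, which exists and is unique precisely because $N$ is $CAT(0)$, converts this into a bound on the oscillation of $u$ over $B_{r/2}$ by a scale-invariant energy quantity.

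The final step is to upgrade this bound into genuine decay. Here I would establish an almost-monotonicity formula for the scaled energy $r^{2-\nu} E(B_r)$ (with $\nu = 2n+2$ the homogeneous dimension of $\H$), in the spirit of Korevaar--Schoen, and couple it with a Caccioppoli inequality and the Poincaré inequality to obtain a geometric decay $\operatorname{osc}_{B_\rho} u \leq C (\rho/r)^\alpha \operatorname{osc}_{B_r} u$ across dyadic CC balls. Iterating this across scales yields Hölder continuity with an explicit exponent $\alpha$, via the Campanato-type characterization of Hölder spaces adapted to the CC metric.

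The main obstacle is twofold. First, every step must be carried out \emph{intrinsically}, using only the horizontal metric $g_H$ together with the non-canonical volume measure rather than any auxiliary Riemannian structure; in particular, the competitor maps built from geodesic homotopies must be verified to be admissible in the sub-elliptic energy so that the subsolution inequality is legitimate. Second, the anisotropy of the sub-Riemannian dilations, namely the parabolic scaling in the vertical (bracket) direction, makes the monotonicity formula and the scaling behaviour of the CC balls substantially more delicate than in the Riemannian case. Controlling the vertical direction through the Hörmander bracket-generating condition, and feeding this into Jerison's Poincaré inequality, is exactly what bridges the energy decay to the oscillation decay and is where I expect the essential difficulty to lie.
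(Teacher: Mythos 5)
Note first that the paper you are being compared against does not actually prove this statement: it is quoted as Theorem 1.1 from \cite{gui2022subelliptic}, and the body of the paper only develops machinery for the stronger Lipschitz result on $\H$. So your proposal can only be judged against the approach of the cited work and on its own feasibility. Your Steps 1 and 2 are sound and are indeed the standard opening moves: the target-variation argument showing $f_Q=\tfrac12 d_N^2(u,Q)$ is a weak subsolution of the H\"ormander operator, and the sub-elliptic De Giorgi--Nash--Moser theory (doubling from Nagel--Stein--Wainger, Poincar\'e from Jerison) giving a sub-mean-value inequality; together with barycenters and a Poincar\'e inequality for metric-space-valued maps this yields $\mathrm{osc}_{B_{r/2}}^2 u \leq C\, r^2 \fint_{B_r} e(u)$. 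This is consistent with the toolbox of the cited paper and with the present paper's own use of Moser iteration on distance-type subsolutions (though for its Lipschitz theorem the paper works with the difference function $d^2\bigl(u(x),u(x\cdot\delta_\eps\omega)\bigr)$ rather than $d^2(u(x),Q)$).

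The genuine gap is Step 3, which is where the theorem actually lives. To convert the bound $\mathrm{osc}_{B_{r/2}}^2 u \leq C r^2 \fint_{B_r} e(u)$ into H\"older continuity you need energy growth of the form $\int_{B_r} e(u) \leq C r^{\nu-2+2\alpha}$, i.e.\ a decay exponent that \emph{exceeds} $\nu-2>0$. Neither mechanism you propose produces this. A Caccioppoli/hole-filling argument only gives $E(B_\rho)\leq C(\rho/r)^{2\epsilon}E(B_r)$ for some unquantified small $\epsilon$, which is useless for the Campanato/Morrey step because one cannot ensure $2\epsilon>\nu-2$ (already $\nu-2=2n\geq 2$ on $\H$). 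And the ``almost-monotonicity formula for $r^{2-\nu}E(B_r)$'' is not available: monotonicity formulas for energy minimizers require domain variations (Pohozaev-type identities), which are not established for Korevaar--Schoen energies in Carnot--Carath\'eodory geometry; moreover your formulation is tied to the homogeneous dilations $\delta_\eps$ of $\H$, whereas the statement concerns a \emph{general} H\"ormander sub-Riemannian manifold $M$, which carries no dilation structure at all. In the literature this contraction across scales is obtained by a different use of the $CAT(0)$ geometry: for instance, one applies the weak Harnack inequality to nonnegative \emph{supersolutions} such as $\rho_r^2-d^2(u(x),Q_r)$, where $B(Q_r,\rho_r)\subset N$ encloses the image $u(B_r)$, so that midpoint/quadrilateral convexity of $N$ forces the image of the half-ball into a ball of definitely smaller radius; iterating gives geometric oscillation decay with no monotonicity formula needed. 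Without an argument of this kind, your outline establishes boundedness and comparability of the relevant scale-invariant quantities, but no decay, and hence does not yet prove H\"older continuity.
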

	
	What about the Lipschitz regularity? In the present paper we develop an approach to establish the Lipschitz regularity of hypo-elliptic harmonic maps in the setting described above. Our main result asserts that 
	\begin{thm}\label{mainthm}
		Let $u:\H\rightarrow N$ be an energy minimizing harmonic map, where $\H$ is the n dimensional Heisenberg group and $N$ is a CAT(0) space. Then $u$ is locally Lipschitz continuous.  
	\end{thm}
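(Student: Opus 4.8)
The plan is to reduce the statement of Theorem~\ref{mainthm} to a local $L^\infty$ bound for the horizontal energy density $e(x):=|\nabla_H u|^2(x)$, where $\nabla_H$ is the horizontal gradient with respect to an orthonormal frame $X_1,\dots,X_n,Y_1,\dots,Y_n$ of $H$ satisfying $[X_i,Y_j]=\delta_{ij}T$, and the energy is $E(u)=\int_{\H}|\nabla_H u|^2\,d\mathrm{vol}$. The theorem of Gui et al. quoted above already gives that $u$ is H\"older continuous, so by the Korevaar--Schoen construction its energy measure is absolutely continuous and $e\in L^1_{\mathrm{loc}}$, with directional densities $|u_*(V)|^2$ defined for horizontal $V$. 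Following the strategy of Zhang--Zhu for Alexandrov targets, I would: (i) establish a weak Bochner-type inequality exhibiting $e$ as a subsolution of $\Delta_H+C$, where the $CAT(0)$ hypothesis produces the favourable sign; (ii) run a sub-elliptic Moser iteration for $\Delta_H=\sum_i(X_i^2+Y_i^2)$ on Carnot--Carath\'eodory balls; and (iii) integrate the resulting gradient bound along horizontal curves.

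Step (i) is where non-positive curvature of the target is used. For each fixed $P\in N$, convexity of $d_N^2(\cdot,P)$ together with harmonicity of $u$ makes $x\mapsto d_N^2(u(x),P)$ subharmonic for $\Delta_H$. More importantly, applying the $NPC$ quadrilateral comparison to the approximating energies $E_\epsilon^u$ and letting $\epsilon\to0$ should yield, for every nonnegative $\phi\in C_0^\infty(\H)$,
\[
\int_{\H} e\,\Delta_H\phi\,d\mathrm{vol}\;\ge\;-C\int_{\H} e\,\phi\,d\mathrm{vol},
\]
i.e. $\Delta_H e\ge -Ce$ weakly. Since $N$ carries no smooth structure, this inequality replaces the classical Bochner formula; the target-curvature contribution, which would obstruct the argument for positively curved targets, is here nonnegative and is simply discarded.

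The main obstacle lies in carrying out (i) and (ii) in the sub-Riemannian setting, because the horizontal fields do not commute. Integrations by parts and the Bochner-type manipulations generate terms involving the vertical field $T$ and mixed commutators, in particular the vertical derivative $Tu$, which the second-order operator $\Delta_H$ does not control directly. Bounding $Tu$ and the mixed second derivatives is precisely the hypoelliptic difficulty: vertical regularity is recovered only through $[X_i,Y_i]=T$, and quantitatively this forces one to invoke the sub-elliptic Poincar\'e and Sobolev inequalities on $\H$, with the homogeneous dimension $Q=2n+2$ playing the role of the dimension, in order to absorb the commutator terms into the favourable ones. I expect the genuinely delicate point to be executing the $NPC$ comparison and this commutator bookkeeping at the same time; it will likely be cleanest to first work at the level of the difference-quotient functionals $E_\epsilon^u$, where every quantity is expressed through target distances and the comparison inequalities are transparent, and only afterwards pass to the limit $\epsilon\to0$. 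The same homogeneous structure provides the volume doubling and Poincar\'e inequality that the Moser iteration in (ii) requires.

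Granting (i), the sub-elliptic De Giorgi--Nash--Moser theory applied to the nonnegative subsolution $e$ gives a mean-value inequality $\sup_{B_{R/2}} e\le C\,\mathrm{vol}(B_R)^{-1}\int_{B_R} e\,d\mathrm{vol}$ at a fixed scale $R$. Since the right-hand side equals $C\,\mathrm{vol}(B_R)^{-1}E(u,B_R)$ and the local energy is finite, this yields $e\in L^\infty_{\mathrm{loc}}$, i.e. $|\nabla_H u|\le L$ on compact sets. Finally, for any horizontal curve $\gamma$ joining $x$ to $y$ one has $d_N(u(x),u(y))\le\int|\nabla_H u|\,|\dot\gamma|\le L\,\mathrm{length}(\gamma)$, and taking the infimum over horizontal paths gives $d_N(u(x),u(y))\le L\,d_{cc}(x,y)$, which is exactly the local Lipschitz continuity asserted in Theorem~\ref{mainthm}.
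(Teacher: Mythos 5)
Your plan hinges entirely on step (i): a weak Bochner inequality $\Delta_H e\ge -Ce$ for the horizontal energy density of the map itself. This is precisely the step that does not survive the passage to the sub-Riemannian setting, and the proposal gives no mechanism for it beyond the hope that the Korevaar--Schoen comparison ``should yield'' it after the commutator bookkeeping. Two concrete obstructions: first, the Zhang--Zhu argument (and its RCD descendants by Mondino--Semola and Gigli) derives the Bochner-type inequality from a synthetic lower Ricci bound on the \emph{domain}, but the Heisenberg group with the Carnot--Carath\'eodory distance and Lebesgue measure satisfies no curvature-dimension condition $CD(K,N)$ for any $K$ and $N$ (Juillet); it only satisfies the much weaker $MCP$, which is what the paper actually uses, and which is not enough to run that machinery. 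Second, even for a smooth sub-elliptic harmonic \emph{function} on $\H$, the Bochner identity for the H\"ormander operator produces commutator terms involving the vertical derivative $Tu$ that cannot be absorbed into $-Ce$; controlling them is exactly the content of the Baudoin--Garofalo generalized curvature-dimension inequality, which necessarily brings the vertical gradient into the iteration. So what you describe as ``delicate bookkeeping'' is in fact the crux, and there is no known way to close it along the route you propose; for a singular $CAT(0)$ target there is not even a pointwise Hessian to organize the computation. (Your final step (iii) also needs, more mildly, a Fubini-type argument reducing the a.e.\ bound on $e$ to absolute continuity of $u$ along a.e.\ horizontal flow line, but that is standard.)

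The paper avoids the energy-density Bochner formula altogether, and this is the idea your proposal is missing: exploit the group structure. Left translations (composed with dilations) are isometries of $(\H,d_{cc})$ preserving the Haar measure, so the translate $u_w(x)=u(\tau_w x)$ of an energy minimizer is again an energy minimizer. Applying the $CAT(0)$ quadrilateral comparison to the geodesic interpolation between the two minimizers $u$ and $u_w$, and using minimality of \emph{both}, one gets that the scalar function $f(x)=d_N^2\bigl(u(x),u_w(x)\bigr)$ satisfies $\int_\Om X\eta\cdot Xf\le 0$, i.e.\ $f$ is a weak subsolution of the H\"ormander operator --- with no curvature error term and no commutator terms at all; the only genuinely Heisenberg-specific input is the symmetry lemma (Lemma~5.3/5.4) identifying the limit of the averaged difference-quotient cross term with $\sum_j X_j\eta\, X_j f$. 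Sub-elliptic Moser iteration is then run on $f$ rather than on $e$, giving $d_N^2\bigl(u(x),u_w(x)\bigr)\le C\fint_{B_r(x)}d_N^2\bigl(u(y),u_w(y)\bigr)\,d\mu(y)$; dividing by $\eps^2$ (for $w$ a horizontal increment of size $\eps$) and letting $\eps\to0$ bounds the pointwise Lipschitz constant by the average energy density. In short: the paper differentiates the \emph{comparison between a minimizer and its translate}, where non-positive curvature enters only through the transparent quadrilateral inequality, instead of differentiating the energy density, where it would have to enter through an unavailable Bochner formula.
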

	
	The Heisenberg group serves as an Euclidean-like model for the sub-Riemannian setting, which makes Theorem~\ref{mainthm} a first step into the regularity theory of sub-elliptic harmonic maps in such general targets, but we would like to emphasise that the present approach only requires the sub-Riemannian manifold to be the Heisenberg group in a technical way in Lemma~\rf{lem5.3}. More specifically, in order to prove this Lemma we need a particular symmetry assumption which is satisfied, for example, by the Heisenberg group.
	
	The paper is organised as follows, in section 2, we present some necessary preliminaries. In section 3, we will review the sub-elliptic harmonic function and illustrate the basic idea with a smooth target. Section 4 is devoted to the establishment of the Sobolev space of the sub-Riemannian manifold with values in singular spaces. Section 5 consists of basic calculus on the new Sobolev space. and the establishment of the main theorem and its proof.

    The results of this paper constitute the main part of the second author's PhD thesis.

    \begin{acknowledgement*}
    The first two authors would like to thank the Max Planck Institute for Mathematics in the Sciences for its hospitality.
    \end{acknowledgement*}

\section{\bf Preliminaries}
\setcounter{equation}{0}
In this section, we introduce some basic notation and definitions.
\subsection{Geometry of sub-Riemannian manifolds}

Let $M$ be a smooth manifold and $S$ be a distribution of the tangent bundle $TM$ with rank $m$. A triple $(M, Q_S,S)$ is called a sub-Riemannian manifold of rank $m$, if $Q_S$ is a Riemannian metric on $S$. Suppose $X_i$ is a local basis of $S$ defined on some open subset $U\subset M$. Denote
  $$\Gamma(U,S)=\text{span}\{X_1,\cdots,X_m\},$$
and inductively define
\[
\Gamma^{j+1}(U,S)=\Gamma^j(U,S)+[\Gamma(U,S),\Gamma^j(U,S)],
\]
where $[\cdot,\cdot]$ is the Lie bracket of the vector fields.
\begin{defn}
S is said to satisfy the H\"{o}rmander condition of order $r$ if
\[
\Gamma^r(U,S)_x=T_xM\quad \forall x\in  U.
\]
\end{defn}
  
It is easy to check that the definition is independent of the particular choice of the local basis. 

\begin{comment}
		For  $x\in M$,  the
		quadratic $Q_x$ yields  a unique linear mapping
		$g_S(x):T_x^*\rightarrow T_x$ by
		\[
		Q_x(g_S(x)\xi,X)=\<\xi,X\>,\quad \forall X\in S_x
		\]
		for $\xi\in T_x^*$
		, where $\<,\>$ is the pairing between $T_x^*$ and $T_x$. Clearly, $g_S$ varies smoothly on $M$ as
		$g_S(v,w)$ is a smooth function $\forall v,w\in \Gamma(S)$ and it is nonnegative since
		the distribution is semi positive definite.
	\end{comment}

In order to define a Laplace operator, in addition, one needs a volume form. However, such a volume form is not canonically defined. One can introduce a compatible volume form as follows. Choose any background Riemannian metric $g_0$, consider the vertical distribution $V$ of $S$ with respect to the metric $g_0$, to  obtain an orthogonal decomposition of the tangent bundle, $TM=S\oplus V$. Then we define a metric $g=g_S+g_{0,V}$, i.e. we assert that the vectors in $V$ and $S$ be orthogonal and use the restriction metric on $V$ and $S$ respectively. In this way, we get an extension of the original sub-Riemannian metric and we can define a volume form. Classical examples are given by the Carnot group, in particular the Heisenberg group, or  contact manifolds,
see \cite{dong2020eells}.\medskip

We now introduce the fundamental distance function that is naturally associated with the distribution, called the Carnot-Carath\'{e}odory distance (cc distance for short). We emphasize that this distance is not induced by the Riemannian metric $g$, and hence it may be  very different from the Riemannian distance. \medskip

We say a curve $\gamma :I\rightarrow M$ is horizontal if $\dot{\gamma}(t)\in S_{\gamma(t)}, \forall t\in I$. We call $(\gamma(t),\xi(t))$ a lift of the curve $\gamma(t)$ if $g_S(\gamma(t))\xi(t)=\dot{\gamma}(t)$. Define the length of the curve $\gamma$ by
\[
l(\gamma)=\int_{a}^{b}\sqrt{\< g_S(\xi(t)),\xi(t)\>}dt.
\]

For any two points $p,q\in M$, the distance between them is then defined as the infimum of all
horizontal curves connecting the two points $p,q$. More precisely, we define
\[
d_{cc}(p,q)=\inf_{\gamma}\{l(\gamma):\gamma \;\;\text{is a horizontal curve}\}.
\]
By a classical theorem of Chow and Rashevsky\cite{chow2002systeme,rashevsky1938any}, there always exists a horizontal curve on a connected manifold, hence the distance is finite. We refer to the book \cite{burago2001course} for a proof of
the fact that $d_{cc}$ is indeed a metric on $M$.

Fix a base point $q_0\in M$ and choose a local orthonormal frame $X_j, i=1,\cdots, k$, for our distribution $S$. Let $\Phi$ be the corresponding local flows. We use them to move in the horizontal directions, 
\[
\Phi_i(t)(q)=q+tX_i(q)+O(t^2),
\]
for $t$ small. Note that the length of such a simple curve with $0<t<\eps$ is simply $\eps$. We may move in the $\Gamma^2\setminus \Gamma$ directions along horizontal paths by applying the commutators $\Phi_{ij}(t) = [\Phi_i(t),\Phi_j(t)]$ to the point $q_0$. Now $\Phi_{ij}(t)(q_0)=q_0+t^2[X_i,X_j](q_0)$, so if we restrict to $0<t<\eps$ we will move by a Euclidean amount $\eps^2$ in the $\Gamma^2/\Gamma$
direction. We continue the process of taking commutators and brackets until we have 
exhausted the tangent space. 

For multi-indices $I=(i_1,i_2\cdots,i_m),1\leq i_j\leq k$, define vector fields $X_I$ inductively by 
	$X_I=[X_{i_1},X_J]$, where $J=(j_2,j_3,\cdots,j_m)$. By the H\"ormander assumption we can select a local frame for the entire tangent bundle from amongst the $X_I$. We choose such a frame and relabel it $Y_i, i=1,\cdots,n$, to respect the canonical filtration: $\set{Y_1=X_1,\cdots,Y_k=X_k}$ span $S$ near $q_0$; $\set{Y_1,\cdots,Y_{n_2}}$ span $\Gamma^2$ near $q_0$ and so on, where $(k,n_2,n_3,\cdots,n_r)$ is called the growth vector of the distribution at $q_0$. For each chosen $Y_i$ of the form $X_I$, let $\om_i$ be the length $\abs{I}$. Thus $\abs{\om}_i=m$ if and only if $Y_i\in\Gamma^m$ and $Y_i\neq\Gamma^{m-1}$. The assignment $i\mapsto\om_i$ is called 
	the weighting associated to the growth vector. 
	The $\om$-weighted 
	box of size $\eps$ is the point set 
	\[
	B^\om(\eps)=\set{y\in\real:\abs{y}_i\leq\eps^{\om_i},i=1,\cdots,n}.
	\]
\begin{thm}[Ball-box theorem]
There exist positive constants $c<C$ and $\eps_0>0$ such that for all $\eps<\eps_0$, such that the metric ball $B_\eps(q_0)$ defined by the sub distance looks like
\[
B(c\eps)\subset B^\om_\eps(q_0)\subset B(C\eps). 
\]
\end{thm}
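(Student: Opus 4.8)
The plan is to work in a fixed system of privileged (adapted) coordinates centered at $q_0$ associated to the frame $Y_1,\dots,Y_n$ selected above, and to prove the two halves of the sandwich separately by controlling horizontal paths against the weighting $\om_i$. Here $|y|_i$ denotes the $i$-th coordinate, $B^\om(\eps)=\set{|y|_i\le \eps^{\om_i}}$ is the weighted box, and I will write $\|y\|_\om=\max_i |y_i|^{1/\om_i}$ for the associated homogeneous pseudo-norm. The goal is to show $B^\om(c\eps)\subset B_\eps(q_0)\subset B^\om(C\eps)$ for $\eps<\eps_0$.

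First I would establish the constructive inclusion $B^\om(c\eps)\subset B_\eps(q_0)$, which is essentially already outlined in the discussion preceding the statement. To reach a point $y$ whose $i$-th privileged coordinate has size of order $\eps^{\om_i}$, one concatenates the horizontal moves $\Phi_i(t)$ and their iterated commutators $\Phi_I(t)$: displacing a Euclidean amount $\eps^{\om_i}$ in the $Y_i$-direction (where $Y_i$ has weight $\om_i$) requires a commutator of order $\om_i$ built from flows run for time $O(\eps)$, and each flow segment is a horizontal curve of length $O(\eps)$. Since the frame is finite and the number of bracket operations is bounded by the step $r$, concatenating these finitely many segments yields a horizontal curve from $q_0$ to $y$ of total cc-length $\le C'\eps$, so $d_{cc}(q_0,y)\le\eps$ after choosing $c$ small. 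Making the displacements uniform requires the inverse function theorem applied to the map sending the vector of flow-times to the resulting endpoint, whose differential at the origin is invertible after the anisotropic rescaling $y_i\mapsto\eps^{-\om_i}y_i$.

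The harder inclusion is $B_\eps(q_0)\subset B^\om(C\eps)$: a point at cc-distance $\le\eps$ from $q_0$ must have $i$-th coordinate bounded by $C\eps^{\om_i}$. The key analytic input is that in privileged coordinates each horizontal field $X_\alpha\in S$ decomposes as $X_\alpha=\sum_i a_\alpha^i(y)\,\partial_{y_i}$ with $|a_\alpha^i(y)|\le C\,\|y\|_\om^{\om_i-1}$; this is exactly what ``privileged'' means, and it is proved by tracking the order of vanishing of the coordinate functions along the filtration $\Gamma\subset\Gamma^2\subset\cdots$. Given a horizontal curve $\gamma$ parametrised by cc-arclength with $l(\gamma)\le\eps$, writing $\dot\gamma=\sum_\alpha u_\alpha(t)X_\alpha(\gamma)$ with $\sum_\alpha u_\alpha^2\le1$ and inserting the component bounds into the ODE for $y_i(t)$ gives the differential inequalities $|\dot y_i|\le C\|y(t)\|_\om^{\om_i-1}$. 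A Gronwall/scaling argument, or equivalently comparison with the homogeneous nilpotent approximation via the dilations $\delta_\lambda(y_i)=\lambda^{\om_i}y_i$, then yields $\|y(t)\|_\om\le Ct$, and in particular $|y_i(\eps)|\le C\eps^{\om_i}$, which is the claim.

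I expect the main obstacle to be the privileged-coordinate estimate $|a_\alpha^i|\lesssim\|y\|_\om^{\om_i-1}$ underpinning the lower bound: it is what forces horizontal curves to move slowly in the high-weight directions and is the entire content of the nontrivial inclusion. Establishing it cleanly requires coordinates in which the order of vanishing of $y_i$ at $q_0$ equals $\om_i$, together with the verification that differentiating along any horizontal field lowers this order by at most one. The Heisenberg group is the model case where these estimates can be written down explicitly and the dilations $\delta_\lambda$ are group automorphisms, which makes the comparison with the homogeneous model exact and the constants $c<C$ fully effective.
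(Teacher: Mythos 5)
The paper never proves this statement: the Ball--Box theorem is quoted there as classical background (it goes back to Nagel--Stein--Wainger and Mitchell; see also Gromov and Bella\"{\i}che), and the only related material is the informal discussion preceding the statement, which sketches how iterated commutators of the flows $\Phi_i(t)$, run for time $\eps$, move $q_0$ by a Euclidean amount $\eps^{\om_i}$ in a weight-$\om_i$ direction. So there is no proof in the paper to compare against; judged on its own, your proposal is the standard proof from the literature and its structure is correct. Your inclusion $B^\om(c\eps)\subset B_\eps(q_0)$ formalizes exactly the paper's heuristic, with one step to handle more carefully: after the anisotropic rescaling $y_i\mapsto\eps^{-\om_i}y_i$ the endpoint map (flow-times to endpoint) is merely continuous near the origin, with components involving fractional powers, so surjectivity onto a small box should be obtained by a degree-theoretic or fixed-point argument rather than a literal application of the inverse function theorem. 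Your reverse inclusion $B_\eps(q_0)\subset B^\om(C\eps)$ correctly identifies the genuinely hard content, which the paper's discussion omits entirely: the coefficient bound $\abs{a_\alpha^i(y)}\le C\,\|y\|_\om^{\om_i-1}$ for horizontal fields in privileged coordinates, followed by the bootstrap/Gronwall estimate $\|y(t)\|_\om\le Ct$ along unit-speed horizontal curves. Be aware that the existence of such privileged coordinates is itself a nontrivial lemma: linear coordinates adapted to the filtration $\Gamma\subset\Gamma^2\subset\cdots$ only guarantee the correct weights at the level of differentials at $q_0$, and one must pass to polynomial (e.g.\ canonical second-kind) coordinates to get the pointwise order-of-vanishing statement you use. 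Finally, note that the paper states the sandwich in the reversed form, with the weighted box squeezed between two metric balls; this is equivalent to your formulation up to replacing $\eps$ by $\eps/C$ and $\eps/c$, so no content is lost, but the translation deserves a sentence.
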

\begin{rem}
This theorem in particular asserts that the topological properties of  sub-Riemannian geometry are similar to those of Riemannian geometry. However, the Hausdorff dimension, which depends heavily on the metric is quite different, see below.
\end{rem}

We next recall some necessary analysis on sub-Riemannian geometry. Let $\Om\subset M$ be a bounded domain. Choose a local coordinate chart $x_i$ on $\Omega$, we write $X_j=b^{jk}\partial_{x^k}$; the adjoint of $X_j$ then
	is defined by $X_j^*f=-\partial_{x^k}(b^{jk}f)$ for $f\in C_0^1(\tilde\Om)$.
	\begin{defn}[\ho{} operator]
		The \ho{} operator is defined
		\be
		H=\sum_{j=1}^mX_j^*X_j=-\sum_{i,k=1}^m\partial_{x^i}( a^{ik}(x)\partial_{x^k}),
		\qe
		where $a^{ik}(x)=b^{ji}(x)b^{jk}(x)$. It can be seen as the subelliptic version of
		the Laplace operator. 
	\end{defn}

\begin{rem}
		In particular,
		$a^{ik}\in C^\infty(\tilde\Om)$
		and $(a^{ik}(x))_{i,k=1,\dots , m}$ is symmetric and positive semidefinite.
		It need not be positive definite, however, and therefore, in general,
		$H$ is not elliptic.
	\end{rem}

\begin{eg}[The Heisenberg group $\H$]\label{Heisenbergexample}
The following example plays a significant technical role later on this work, so we investigate it thoroughly. On $\mathbb{R}^{2n+1}$, define the following group operation, 
\[
(x,y,t)\cdot(u,v,s)=(x+u,y+v,t+s+\f12(xv-yu)),
\]
where $x,y,u,v\in\real$ and $xv$ is the standard inner product in $\real$. The resulting group is denoted by $\H$ and called the Heisenberg group. One may define a sub-Riemannian structure on $\H$ with the horizontal vectors given by 
\[
X_i=\pa_{x_i}-\f12y_i\pa_t,Y_i=\pa_{y_i}+\f12x_i\pa_t, Z=\pa_t.
\]
It is easy to see that 
\[
Z=[X_i,Y_i], 0=[X_i,Z]=[Y_i,Z]=[X_i,Y_j].
\]
The Lie algebra of the Heisenberg group is given by
$\mathfrak{g}=\mathfrak{g}_1\oplus\mathfrak{g}_2$, where $\mathfrak{g}_i=span\set{X_i,Y_i}$. Due to the simple graded feature of this Lie algebra, one can define a natural dilation and translation operators $\delta$ and $\tau$, respectively, on $\mathfrak{g}$. Namely for any $\eps>0$, let $w=(x,y,t)$, and $v=(x',y',t')$. Define 
\[
\delta_\eps(w)=(\eps x,\eps y, \eps^2t),\quad \tau_w(v)=w\cdot v.
\]
Now one uses the exponential map to induce a dilation and a translation operators on the group itself. We abuse notation and still call the induced maps on $\H$ by $\delta_w$ and $\tau_w$. The measures behaviour under dilation is also good, meaning it satisfies 
\[
\mathcal{L}^{2n+1}(\delta_\eps(E))=\eps^{2n+2}\mathcal{L}^{2n+1}(E).
\]
The cc-distance is well behaved under the translation $\tau_w$ and the dilation $\delta_\eps$. Moreover, it is left-invariant:
\[
d_{cc}(\tau_w v,\tau_w v')=d_{cc}(v,v'),
\]
and
\[
d_{cc}(\delta_\eps w,\delta_\eps v)= d_{cc}(w,v)
\]
\end{eg}
\begin{rem}
		Since $\H$ is a Lie group, there exists a natural left translation operator, hence the unit ball under the non-isotropic metric is indeed translation invariant. In other words, this space is locally homogeneous. Hence, we may just consider the unit ball centred at the origin. The scaling property also indicates why the Hausdorff dimension become 2n+2. It has two dimension in the $Z$-direction.
	\end{rem}
	\begin{rem}
		The Heisenberg group $\H$ satisfies the so-called Measure Contraction Property(MCP(0,2n+3)), which is verified in \cite{juillet2009geometric}. This property in particular enables us to define the Sobolev space along the lines of Korevaar and Schoen, \cite{korevaar1993sobolev}.
	\end{rem}
	\begin{defn}
		The Pansu differential of a smooth function in the direction $w$ can be naturally given as in the Euclidean space, 
		\[
		(Pf)_p(w)=\lim_{\eps\rightarrow0}\f{f(p\cdot\delta_\eps(w))-f(p)}\eps,
		\]
		if the limit is exists.
	\end{defn}
	A celebrated theorem analogous to the Lebesgue differential theorem asserts that \cite{tan2004sobolev}
	\begin{thm}
		For a smooth function $f\in\H$, there holds
		\[
		\lim_{\eps\rightarrow0}\int_{\rho(w)\leq1}\abs{(Pf)_p(w)-\f{f(p\cdot\delta_\eps(w))-f(p)}\eps}^2d\sigma(w)=0.
		\]
	\end{thm}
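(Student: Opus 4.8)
The plan is to reduce the statement to an elementary Taylor expansion along the dilation curve, exploiting that in $\H$ the Pansu differential only sees the horizontal directions. First I would parametrise the curve $\eps\mapsto\gamma_w(\eps):=p\cdot\delta_\eps(w)$ through $p$. Writing $p=(p_1,p_2,p_3)$ and $w=(x,y,t)$ and using the group law together with the dilation $\delta_\eps(w)=(\eps x,\eps y,\eps^2 t)$, one finds
\[
\gamma_w(\eps)=\(p_1+\eps x,\ p_2+\eps y,\ p_3+\eps^2 t+\f\eps2(p_1\cdot y-p_2\cdot x)\).
\]
Differentiating at $\eps=0$ gives $\gamma_w'(0)=(x,y,\f12(p_1\cdot y-p_2\cdot x))$, and comparing with the frame vectors $X_i,Y_i$ evaluated at $p$ shows that the Pansu differential is the horizontal pairing
\[
(Pf)_p(w)=\sum_{i=1}^n x_i\,(X_if)(p)+\sum_{i=1}^n y_i\,(Y_if)(p),
\]
which is linear in the horizontal part $(x,y)$ of $w$ and, crucially, independent of the vertical coordinate $t$. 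This explicit formula is the conceptual heart of the statement: the $\eps^2$-scaling of the $t$-direction annihilates the vertical contribution at first order.

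Second, with $g(\eps):=f(\gamma_w(\eps))$, I would apply Taylor's theorem with Lagrange remainder,
\[
\f{f(p\cdot\delta_\eps(w))-f(p)}\eps-(Pf)_p(w)=\f{g(\eps)-g(0)}\eps-g'(0)=\f\eps2\,g''(\eps_*),
\]
for some $\eps_*\in(0,\eps)$. The chain rule expresses $g''$ in terms of $\nabla^2 f$ contracted twice with the velocity $\gamma_w'$ and of $\nabla f$ paired with the acceleration $\gamma_w''=(0,0,2t)$.

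Third comes the uniform estimate. On the unit ball $\set{\rho(w)\leq1}$ the homogeneous norm forces $\abs{x}$, $\abs{y}$ and $\abs{t}$ to be bounded, so both $\gamma_w'(\eps_*)$ and $\gamma_w''(\eps_*)$ stay uniformly bounded, and for $\eps$ smaller than some $\eps_0$ the whole curve $\gamma_w(\eps_*)$ remains in a fixed compact neighbourhood of $p$. Since $f$ is smooth, $\nabla f$ and $\nabla^2 f$ are bounded there, whence $\abs{g''(\eps_*)}\leq C$ with $C$ independent of $w$ in the unit ball and of $\eps<\eps_0$. Consequently the integrand is dominated pointwise by $(C\eps/2)^2$, uniformly on the finite-measure set $\set{\rho(w)\leq1}$; integrating and sending $\eps\to0$ yields the claim (indeed the convergence is uniform here, hence a fortiori in $L^2$).

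The only point requiring care — and what I would single out as the main technical obstacle — is the bookkeeping of the vertical component of $\gamma_w$: the group multiplication injects a first-order $\eps$-term $\f\eps2(p_1\cdot y-p_2\cdot x)$ into the $t$-slot alongside the genuinely second-order $\eps^2 t$ term, so one must verify that the former is correctly absorbed into the horizontal part of the Pansu differential (through the $\pa_t$-components of $X_i,Y_i$) while the latter only feeds the harmless acceleration $\gamma_w''$. Once this is tracked correctly the estimate is robust: no cancellation beyond smoothness of $f$ and boundedness of the increments on the unit ball is needed.
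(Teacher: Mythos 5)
Your proposal is correct, but there is nothing in the paper to compare it against: the paper does not prove this theorem, it simply imports it from \cite{tan2004sobolev} (``a celebrated theorem analogous to the Lebesgue differential theorem''), so your argument in fact supplies a proof where the paper gives none. Your computations check out: the group law gives $\gamma_w(\eps)=\big(p_1+\eps x,\,p_2+\eps y,\,p_3+\eps^2t+\tfrac{\eps}{2}(p_1\cdot y-p_2\cdot x)\big)$, hence $\gamma_w'(0)=\big(x,y,\tfrac12(p_1\cdot y-p_2\cdot x)\big)=\sum_i x_iX_i(p)+\sum_i y_iY_i(p)$, which correctly identifies $(Pf)_p(w)$ with the horizontal pairing and shows it is independent of $t$; and since $\gamma_w(\eps)$ is polynomial in $\eps$ and $f$ is smooth, the Lagrange remainder satisfies $\abs{g''(\eps_*)}\leq C$ uniformly for $w$ in the closed unit ball (where $\abs{x},\abs{y},\abs{t}$ are bounded, e.g.\ by the ball-box theorem) and $\eps<\eps_0$, so the integrand is $O(\eps^2)$ uniformly and the convergence is uniform, a fortiori $L^2$ against the finite measure $\sigma$ on $\set{\rho(w)\leq 1}$. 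The one caveat is scope rather than correctness: the theorem is ``celebrated'' because in \cite{tan2004sobolev} it is established for Sobolev (non-smooth) functions, where the pointwise Taylor expansion at the heart of your argument is unavailable and one must instead combine smooth approximation with maximal-function estimates; for the hypothesis actually stated here ($f$ smooth), your elementary argument is complete, and it suffices for the use the paper makes of the result, namely computing differentials of smooth test functions in the proof of Lemma \ref{lem5.3}.
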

	This theorem will be useful when we explicitly calculate the differential of a map from the Heisenberg group.

\section{\bf Analysis on sub-Riemannian manifolds: smooth targets}
In this section, we introduce some analytic tools and define the Sobolev space of maps from sub-Riemannian into smooth manifolds. Below we always assume $\Om\subset M$ is fixed unless otherwise stated. 
\subsection{Energy functional}
Let $N$ be a complete Riemannian manifold without boundary
of dimension bigger or equal to $2$. Consider maps
$$f:(\bar\Om,d_\eps)\to( N,d_N).$$
Since $(\bar\Om,d_\eps)$, $(N,d_N)$ are Riemannian manifolds
with metric tensors $(a_{ij}^\eps)$ and $(g_{\alpha\beta})$ respectively, if $f:\bar\Om\to N$ is a $C^1$ map, we can define the energy density
\ba
\nonumber
e_\eps(f)
&=& \half( a^{ij}(x)+\eps\delta_{ij}) g_{\alpha\beta}(f)
    \frac{\partial f^\alpha}{\partial x_i}\frac{\partial f^\beta}{\partial x_j}\\
&=& \half g_{\alpha\beta}(f)X_jf^\alpha X_jf^\beta+
    \frac{\eps}{2}g_{\alpha\beta}(f)
    \frac{\partial f^\alpha}{\partial x_j}\frac{\partial f^\beta}{\partial x_j}
\ea
Then the energy of $f$ is simply
\beq
E_\eps(f)=\int_\Om e_\eps(f)d\Om
\eeq
with$d\Om=dx$ being the Lebesgue measure. For $f\in H^1(\Om,N)$,
$$\lim_{\eps\to 0}E_\eps(f)=\int_\Om e(f)dx,$$
where
\beq
e(f)=\half g_{\alpha\beta}(f)X_jf^\alpha X_jf^\beta.
\eeq

So we define the energy of a map
$f:(\bar\Om,d_{cc})\to(N,d_N)$ by
\beq
E(f)=\int_\Om e(f)dx.
\eeq
The Euler-Lagrange equation for this energy functional can be written in the form
\bel{eqn3.5}
Hu^\alpha+\Gamma_{\beta\gamma}^\alpha(u)X_ju^\beta X_ju^\gamma=0,\quad
1\leq\alpha\leq\nu,
\qe
that is, as a harmonic map type system with the H\"ormander operator taking the role of the Laplacian. The equation \rf{eqn3.5} is equivalent to the following, for every $1\leq\a\leq\nu$, we have
\[
-a^{ij}\f{\pa^2u^\a}{\pa x_i\pa x_j}-\pa_ja^{ij}\pa_iu^\a+\Gamma_{\b\ga}^\a(u)a^{ij}
\pa_iu^\b\pa_ju^\ga=0.
\]

The basic idea to derive the Lipschitz regularity is to establish a Bochner formula for the energy density function. 
 
\subsection{The basic strategy} 
 To illustrate the idea, we may first look at a baby case of a sub-elliptic harmonic function. Recall that to obtain a suitable regularity, we may derive a differential inequality for the finite difference of the original function. So let $u:\Om\rightarrow\R$ be a sub-elliptic harmonic function, fix any constant vector $w$ with small norm. Set $u_w(x)=u(\delta_w(x))$, consider the function $u_\eta(x)=(1-\eta)u+\eta u_w$, where $\eta$ is a smooth function with compact support in $\Om$. In this case, since both of $u$ and $u_w$ are harmonic, we have\foo{Here by a sub-elliptic harmonic function, we mean a function minimising the horizontal energy $\int_\Om\abs{Xu}^2=\int_\Om\sum_j\abs{X_ju}^2$ with the same boundary values.}
\bel{inq3.6}
\int_\Om \abs{Xu}^2\leq \int_\Om\abs{Xu_\eta}^2,
\qe
and
\bel{inq3.7}
\int_{\Om_w} \abs{Xu_w}^2\leq \int_{\Om_w}\abs{Xu_{1-\eta}}^2,
\qe
where $\Om_w=\{x\in\Om| d(x,\pa\Om)>\abs{w}\}$. We can expand the right-hand side of the two inequalities \rf{inq3.6} and \rf{inq3.7}, add them together, and cancel the 0-th order term to get \foo{The domain of the two integrals in \rf{inq3.6} and \rf{inq3.7} are different, but due to the cut-off function, after cancelling the 0-th order term, we can choose $w$ with sufficiently small norm so that $\Om$ and $\Om_w$ coincide.}
\begin{align}
0 & \leq\int-X\eta\cdot X(u-u_w)^2-2\int\eta\abs{X(u-u_w)}^2\notag\\
& + \int\abs{X\eta}^2(u-u_w)^2+2\int\eta^2\abs{X(u-u_w)}^2+\int X(\eta^2)\cdot X(u-u_w)^2.
\end{align}
Now, replacing the test function $\eta$ by $t\eta$ and letting $t\rightarrow0$, we finally arrive at the inequality 
\bel{inq3.9}
0\leq\int-X_j\eta X_j(u-u_w)^2-2\int\eta\abs{X(u-u_w)}^2.
\qe
Equation~\rf{inq3.9} is nothing but the weak form of the following differential inequality 
\bel{inq1.15}
H(u-u_w)^2+2\abs{X(u-u_w)}^2\leq 0.
\qe
So combined with the Sobolev embedding inequalities, we can proceed by the De Giorgi-Nash-Moser iteration to control the value at the centre of a ball by its average of the integral over such a ball.

\section{\bf Analysis on sub-Riemannian manifolds: singular targets}
In this section, we start by introducing some basic facts on nonpositively curved spaces. Then we turn our attention to the definition of the Sobolev space of maps into singular targets, where we borrow heavily some arguments from N. Korevaar and R. Schoen~\cite{korevaar1993sobolev}. Finally, we illustrate how to use the non-positivity of the curvature to derive a B\"ochner identity. We let $B_r(q)=\set{y\in M:d_{cc}(y,q)<r}$ denote the metric ball centred at $q$ with radius $r$ in $M$, while $(N,d)$ denotes a CAT(0) space. 
\subsection{Basics on nonpositively curved space}
Since there is no longer a linear structure on our target space $N$, we need to use geodesics to interpolate points between two maps, see \cite{jost1997generalized}. To be more precise, fix a smooth function $\eta$ with compact support and $0\leq \eta\leq1$. Let $u_0,u_1:\Om\rightarrow N$ be two maps. For $x,y\in\Om$, set 
\[
\eta_-=\min(\eta(x),\eta(y)).
\]
Let $\ga_x:[0,1]\rightarrow N$ be the arc-length geodesic with 
\[
\ga_x(0)=u_0(x)\quad\text{and}\quad \ga_x(1)=u_1(x).
\]
We also set $u_\eta(x)=\ga_x(\eta(x))$. Since the subsequent considerations are symmetric in $x$ and $y$, we may assume $\eta_-=\eta(y)$ without loss of generality. In this setting, following the ideas in \cite{jost1997generalized} we consider the ordered quaternary sequence 
\[
\{u_{\eta_-}(y), u_{\eta_-}(x), u_{1-\eta_-}(x), u_{1-\eta_-}(y)\}.
\]

Here we need to use a quadrilateral comparison property in CAT(0) spaces, and we recall this fact and useful equivalent propositions. See \cite[Theorem~2.1.2, Corollary~2.1.3]{korevaar1993sobolev}
\begin{thm}\lab{thm4.1}
Let $(N,d)$ be a CAT(0) space. Fix any quaternary sequence $\{P,Q,R,S\}\subset N$ and the corresponding sequence $\{\bar{P},\bar{Q}, \bar{R}, \bar{S}\}\subset\mathbb{R}^2$. Define geodesics $P_t=\ga_{PS}(t)$ and $Q_s=\ga_{QR}(s)$ using the common convention that $\gamma_{PS}(0)= P$ and $\gamma_{PS}(1)=S$, and similar for $\gamma_{QR}$. The corresponding Euclidean points are
\[
\bar{P}_t=(1-t)\bar{P}+t\bar{S}, \quad\text{and}\quad \bar{Q}_s=(1-s)\bar{Q}+s\bar{R}.
\]
Then we have 
\[
d(P_t,Q_s)\leq \abs{\bar{P}_t-\bar{Q}_s}.
\]
\end{thm}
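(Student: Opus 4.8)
The plan is to reduce the quadrilateral comparison to two applications of the triangle comparison inequality that defines a $CAT(0)$ space, glued along a diagonal. First I would fix the meaning of the comparison configuration $\{\bar P,\bar Q,\bar R,\bar S\}\subset\mathbb{R}^2$: it is the planar quadrilateral whose four side lengths agree with the corresponding distances in $N$, so $\abs{\bar P-\bar Q}=d(P,Q)$, $\abs{\bar Q-\bar R}=d(Q,R)$, $\abs{\bar R-\bar S}=d(R,S)$, $\abs{\bar S-\bar P}=d(S,P)$, and whose diagonal also agrees, $\abs{\bar P-\bar R}=d(P,R)$. Concretely this is built by taking the Euclidean comparison triangles of the two geodesic triangles $\triangle(P,Q,R)$ and $\triangle(P,R,S)$ and gluing them along the common edge $\bar P\bar R$ on opposite sides. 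I will then use repeatedly the two-point form of the $CAT(0)$ inequality: for a geodesic triangle in $N$ with Euclidean comparison triangle, the distance between any two points lying on its sides is bounded above by the distance between the corresponding points of the comparison triangle (this follows from the one-point, vertex-to-opposite-side comparison applied twice).

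Next I would insert the diagonal. Since $\{P,Q,R,S\}$ is cyclically ordered, the point $P_t=\ga_{PS}(t)$ lies on the side $[S,P]$ of $\triangle(P,R,S)$, while $Q_s=\ga_{QR}(s)$ lies on the side $[Q,R]$ of $\triangle(P,Q,R)$, and both triangles share the diagonal geodesic $[P,R]$. For a point $Z=\ga_{PR}(r)$ on this diagonal, with comparison point $\bar Z=(1-r)\bar P+r\bar R$, the two-point inequality applied in each triangle gives $d(P_t,Z)\le\abs{\bar P_t-\bar Z}$ and $d(Z,Q_s)\le\abs{\bar Z-\bar Q_s}$, where $\bar Z$ is the \emph{same} planar point in both estimates precisely because the diagonal $\bar P\bar R$ was matched in the construction. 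The triangle inequality in $N$ then yields
\[
d(P_t,Q_s)\le d(P_t,Z)+d(Z,Q_s)\le \abs{\bar P_t-\bar Z}+\abs{\bar Z-\bar Q_s}.
\]
Finally I would choose $r$ so that $\bar Z$ is exactly the intersection of the planar segment $[\bar P_t,\bar Q_s]$ with the diagonal segment $[\bar P,\bar R]$; for this choice the right-hand side collapses to $\abs{\bar P_t-\bar Q_s}$ by collinearity, which is the asserted bound.

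The step I expect to be the main obstacle is exactly the existence of that intersection point inside the segment $[\bar P,\bar R]$. The collapse to equality requires $\bar P_t$ and $\bar Q_s$ to sit on opposite sides of the diagonal with the crossing occurring within $[\bar P,\bar R]$, i.e. it requires the glued quadrilateral $\bar P\bar Q\bar R\bar S$ to be convex (non-reflex at $\bar P$ and $\bar R$), and convexity is not automatic for an arbitrary quadruple. I would handle this either by (a) verifying that the two comparison triangles produce an angle sum at most $\pi$ at $\bar P$ and at $\bar R$, using that in $CAT(0)$ the comparison angles dominate the true Alexandrov angles, which add correctly along the geodesic $[P,R]$, or, when this fails, by (b) replacing the matched-diagonal quadrilateral with the convex planar quadrilateral having the same four side lengths (whose diagonals are then no shorter than the corresponding distances in $N$) and absorbing the discrepancy through the monotonicity, or arm, lemma, which guarantees that lengthening the side $[\bar P,\bar R]$ only increases distances between points on the two adjacent sides. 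This is precisely Reshetnyak's majorization for a geodesic quadrilateral specialised to four points, and it is the one genuinely non-elementary ingredient; everything else is two invocations of the defining comparison inequality together with a planar triangle inequality.
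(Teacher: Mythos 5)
Your crossing-the-diagonal argument is correct and complete precisely when the glued quadrilateral is convex; in that case it is, in substance, the argument used in the source the paper quotes for this statement (the paper itself gives no proof of Theorem~\ref{thm4.1}, it cites \cite[Theorem~2.1.2, Corollary~2.1.3]{korevaar1993sobolev}). The genuine gap is the non-convex case, and it is more serious than your closing paragraph suggests: it is not just that your choice of $\bar{Z}$ may fail to exist, but that the theorem is \emph{false} for your comparison configuration, so no repair that keeps the matched diagonal can succeed. Take $N$ to be a metric tree with a vertex $O$ of valence four, put $P$ at distance $\varepsilon$ from $O$ on one branch and $Q,R,S$ at distance $1$ from $O$ on the other three. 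Then $d(P,Q)=d(P,R)=d(P,S)=1+\varepsilon$ and $d(Q,R)=d(R,S)=d(Q,S)=2$. Gluing the comparison triangles of $\triangle(P,Q,R)$ and $\triangle(P,R,S)$ along $[\bar{P},\bar{R}]$ with $\abs{\bar{P}-\bar{R}}=d(P,R)$ forces both $\bar{Q}$ and $\bar{S}$ to lie within distance $O(\sqrt{\varepsilon})$ of the same point, so $\abs{\bar{Q}-\bar{S}}=O(\sqrt{\varepsilon})$, while the theorem at $t=1$, $s=0$ (i.e.\ $P_1=S$, $Q_0=Q$) would demand $d(Q,S)=2\leq\abs{\bar{Q}-\bar{S}}$. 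Hence the comparison quadruple in the statement cannot be the matched-diagonal gluing; it must be what Korevaar--Schoen call a subembedding: four sides equal and \emph{both} diagonals at least as long as in $N$, where the diagonal $\abs{\bar{P}-\bar{R}}$ is in general strictly longer than $d(P,R)$.

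Neither of your proposed remedies closes this. Remedy (a) uses the angle comparison in the wrong direction: $CAT(0)$ gives that comparison angles \emph{dominate} Alexandrov angles, which yields lower bounds on the planar angle sums at $\bar{P}$ and $\bar{R}$, never the upper bound $\leq\pi$ you need; indeed in the tree example the angle sum at $\bar{P}$ approaches $2\pi$. Remedy (b) asserts as a parenthetical exactly the hard content: convex planar quadrilaterals with four prescribed side lengths form a flexible one-parameter family in which the two diagonals trade off against each other (for a unit rhombus, $\abs{\bar{Q}-\bar{S}}=\sqrt{4-\abs{\bar{P}-\bar{R}}^2}$ \emph{decreases} as $[\bar{P},\bar{R}]$ lengthens), so your monotonicity claim that lengthening $[\bar{P},\bar{R}]$ only increases distances between points of the two adjacent sides is false already for the corners $\bar{Q},\bar{S}$, and the existence of one convex quadrilateral dominating both diagonals simultaneously is a theorem requiring proof, not a normalization. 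This is exactly Korevaar--Schoen's Theorem~2.1.2, whose proof splits into your convex case and a reflex case handled by a different, degenerate comparison configuration (the reflex vertex straightened onto the segment joining its two neighbours); the pointwise inequality $d(P_t,Q_s)\leq\abs{\bar{P}_t-\bar{Q}_s}$ is then deduced from the subembedding inequalities via the identity
\begin{equation*}
\abs{(1-t)\bar{A}+t\bar{B}-\bar{C}}^2=(1-t)\abs{\bar{A}-\bar{C}}^2+t\abs{\bar{B}-\bar{C}}^2-t(1-t)\abs{\bar{A}-\bar{B}}^2,
\end{equation*}
which is monotone in $\abs{\bar{A}-\bar{C}}$ and $\abs{\bar{B}-\bar{C}}$, rather than via a crossing point. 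Invoking Reshetnyak majorization, as you suggest at the end, is legitimate but then delivers the whole theorem by itself and renders your diagonal argument redundant. To have a complete proof you must either cite Reshetnyak outright, or (i) restate the comparison configuration as a subembedding and (ii) actually construct it in the reflex case.
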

\begin{cor}
Under the assumptions of Theorem \ref{thm4.1}, we have that for any $0\leq t,s\leq 1$
\begin{equation}\label{inq4.1}
    \begin{array}{ccl}
        d^2(P_t,Q_t) & \leq & (1-t)d_{PQ}^2 +td_{RS}^2 \\
        & &\\
        & & -t(1-t)\Big(s(d_{SP}-d_{QR})^2+(1-s)(d_{RS}-d_{PQ})^2\Big).
    \end{array}
\end{equation}

\begin{equation}\label{inq4.2}
    \begin{array}{rcl}
    d^2(P,Q_t)+d^2(S,Q_{1-t}) & \leq &  d_{PQ}^2+d^2_{RS}+t(d_{SP}^2-d_{QR}^2)+2t^2d_{QR}^2\\
    & &\\
     - t\Big(s(d_{SP}-d_{QR})^2&+&(1-s)(d_{RS}-d_{PQ})^2\Big).     
    \end{array}
\end{equation}
\end{cor}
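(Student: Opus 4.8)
The plan is to push everything down to the Euclidean plane: apply Theorem~\ref{thm4.1} to replace the CAT(0) distances $d(P_t,Q_s)$ by the Euclidean distances $|\bar P_t-\bar Q_s|$, and then prove the two inequalities as elementary identities for the four comparison points $\bar P,\bar Q,\bar R,\bar S\in\mathbb{R}^2$, whose side lengths are $|\bar P-\bar Q|=d_{PQ}$, $|\bar Q-\bar R|=d_{QR}$, $|\bar R-\bar S|=d_{RS}$, $|\bar S-\bar P|=d_{SP}$. The only input from the geometry of $N$ is the comparison $d(P_t,Q_s)\le|\bar P_t-\bar Q_s|$; once it is used, no further non-positive-curvature argument is needed. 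Note in particular that the diagonal lengths of the comparison quadrilateral never enter the final bounds, so they must be made to cancel.

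For \eqref{inq4.1} I would take $s=t$, so $d^2(P_t,Q_t)\le|\bar P_t-\bar Q_t|^2$, and write $\bar P_t-\bar Q_t=(1-t)(\bar P-\bar Q)+t(\bar S-\bar R)$. Expanding the square and rearranging via $(1-t)^2=(1-t)-t(1-t)$ and $t^2=t-t(1-t)$ gives
\[
|\bar P_t-\bar Q_t|^2=(1-t)d_{PQ}^2+t\,d_{RS}^2-t(1-t)\,|v|^2,\qquad v:=(\bar P-\bar Q)-(\bar S-\bar R).
\]
The key observation is that the single vector $v$ admits two groupings, $v=(\bar P-\bar Q)-(\bar S-\bar R)=(\bar P-\bar S)-(\bar Q-\bar R)$, so the reverse triangle inequality yields \emph{simultaneously} $|v|^2\ge(d_{RS}-d_{PQ})^2$ and $|v|^2\ge(d_{SP}-d_{QR})^2$. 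Any convex combination of these two lower bounds is again a lower bound for $|v|^2$, and inserting it reproduces exactly the bracketed term in \eqref{inq4.1} for every $s\in[0,1]$.

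For \eqref{inq4.2} I would apply Theorem~\ref{thm4.1} twice, at parameters $(0,t)$ and $(1,1-t)$ (using $P=P_0$, $S=P_1$), to get $d^2(P,Q_t)+d^2(S,Q_{1-t})\le|\bar P-\bar Q_t|^2+|\bar S-\bar Q_{1-t}|^2$. Writing $\bar P-\bar Q_t=(1-t)(\bar P-\bar Q)+t(\bar P-\bar R)$ and $\bar S-\bar Q_{1-t}=t(\bar S-\bar Q)+(1-t)(\bar S-\bar R)$, and expanding each square with the polarization identity $2\langle\bar P-\bar Q,\bar P-\bar R\rangle=d_{PQ}^2+|\bar P-\bar R|^2-d_{QR}^2$ together with its analogue, the cross terms collapse and one is left with
\[
|\bar P-\bar Q_t|^2+|\bar S-\bar Q_{1-t}|^2=(1-t)\big(d_{PQ}^2+d_{RS}^2\big)+t\big(|\bar P-\bar R|^2+|\bar S-\bar Q|^2\big)-2t(1-t)d_{QR}^2 .
\]
Here I expect the main obstacle: the diagonals $|\bar P-\bar R|^2$ and $|\bar S-\bar Q|^2$ appear but are forbidden in the target bound. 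I would remove them with the quadrilateral (median) identity $d_{PQ}^2+d_{QR}^2+d_{RS}^2+d_{SP}^2=|\bar P-\bar R|^2+|\bar S-\bar Q|^2+4|MN|^2$, where $M,N$ are the midpoints of the two diagonals; substituting it makes every diagonal term cancel, and after matching the coefficients $t(d_{SP}^2-d_{QR}^2)+2t^2d_{QR}^2$ on the right-hand side of \eqref{inq4.2}, the whole inequality reduces to $4|MN|^2\ge s(d_{SP}-d_{QR})^2+(1-s)(d_{RS}-d_{PQ})^2$. Since $2(M-N)=(\bar P-\bar Q)+(\bar R-\bar S)=(\bar P-\bar S)+(\bar R-\bar Q)$, the same two-way regrouping and reverse triangle inequality as in \eqref{inq4.1} give $4|MN|^2\ge(d_{RS}-d_{PQ})^2$ and $4|MN|^2\ge(d_{SP}-d_{QR})^2$, and the convex combination in $s$ closes the argument. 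Apart from the diagonal elimination, every remaining step is routine triangle-inequality and convexity bookkeeping.
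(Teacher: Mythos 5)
Your proof is correct, and the computations check out: for \eqref{inq4.1} the exact identity $|\bar P_t-\bar Q_t|^2=(1-t)d_{PQ}^2+t\,d_{RS}^2-t(1-t)|v|^2$ holds with $v=(\bar P-\bar Q)-(\bar S-\bar R)$, and the two groupings $v=(\bar P-\bar Q)-(\bar S-\bar R)=(\bar P-\bar S)-(\bar Q-\bar R)$ together with the reverse triangle inequality do give the convex-combination lower bound on $|v|^2$; for \eqref{inq4.2} your expansion collapses to $d_{PQ}^2+d_{RS}^2+t(d_{SP}^2-d_{QR}^2)+2t^2d_{QR}^2-4t|MN|^2$ (with $M,N$ the diagonal midpoints of your comparison quadrilateral), so the inequality reduces, exactly as you say, to $4|MN|^2\ge s(d_{SP}-d_{QR})^2+(1-s)(d_{RS}-d_{PQ})^2$, which follows from the same two-way regrouping applied to $2(M-N)=(\bar P-\bar Q)+(\bar R-\bar S)=(\bar P-\bar S)+(\bar R-\bar Q)$. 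Your route does differ from the paper's, because the paper gives no argument for this corollary at all: it is imported directly from \cite[Theorem~2.1.2, Corollary~2.1.3]{korevaar1993sobolev}, where the comparison points form a ``subembedding'' and the inequalities are obtained by a Euclidean computation in the same spirit as yours. What your argument adds is a self-contained verification inside the paper, using only Theorem~\ref{thm4.1} plus the side-length equalities $|\bar P-\bar Q|=d_{PQ}$, $|\bar Q-\bar R|=d_{QR}$, $|\bar R-\bar S|=d_{RS}$, $|\bar S-\bar P|=d_{SP}$; notably, the diagonal conditions $|\bar P-\bar R|\ge d_{PR}$ and $|\bar Q-\bar S|\ge d_{QS}$, which are part of the Korevaar--Schoen subembedding, are never needed, so your proof isolates exactly which features of the comparison configuration carry the estimate. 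The cost is the extra bookkeeping (polarization and Euler's quadrilateral identity to eliminate the diagonals); the citation buys brevity but hides this mechanism.
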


Now applying inequality \rf{inq4.2} to the quaternary sequence 
\[
\{u_{\eta_-}(y), u_{\eta_-}(x), u_{1-\eta_-}(x), u_{1-\eta_-}(y)\},
\]
with $t=(\eta(x)-\eta(y))/(1-2\eta(y))$ and set $f(x)=d^2(u_0(x),u_1(x))$, we get 
\begin{equation}\lab{inq4.3}
    \begin{array}{rcl}
        &&d(u_\eta(x),u_\eta(y))^2+d(u_{1-\eta}(x),u_{1-\eta}(y))^2\leq d(u_{\eta_-}(x),u_{\eta_-}(y))^2\\
        &&\\
        &&+d(u_{1-\eta_-}(x),u_{1-\eta_-}(y))^2 - (\eta(y)-\eta(x))(1-2\eta(y))(f(y)-f(x))\\
        &&\\
         &&+ 2(f(x)+f(y))\Big(\f{\eta(y)-\eta(x)}{1-2\eta(y)}\Big)^2.
    \end{array}
\end{equation}
Moreover, inequality \rf{inq4.1} implies that
\begin{equation}\lab{inq4.4}
\begin{array}{rcl}
d(u_{\eta_-}(x),u_{\eta_-}(y))^2+d(u_{1-\eta_-}(x),u_{1-\eta_-}(y))^2 &\leq& d(u_0(x),u_0(y))^2\\
&\\
&+&d(u_1(x),u_1(y))^2.
\end{array}
\end{equation}
Combining the inequalities \rf{inq4.3} and \rf{inq4.4}, we finally obtain 
\begin{equation}\lab{inq4.5}
\begin{array}{rl}
      &d(u_\eta(x),u_\eta(y))^2+d(u_{1-\eta}(x),u_{1-\eta}(y))^2\leq d(u_0(x),u_0(y))^2\\
     &\\
         &+d(u_1(x),u_1(y))^2-  (\eta(y)-\eta(x))(1-2\eta(y))(f(y)-f(x)).
\end{array}    
\end{equation}

\subsection{The approximate energy functional}
The inequality \rf{inq4.5} enable us to use some approximate energy measure to study the regularity of the sub-elliptic harmonic maps, see \cite{korevaar1993sobolev}. We state roughly idea: For any $\eps>0$, consider the following approximate energy density function, 
\[
e_\eps(x)=\f1{\mu(B_\eps(x))}\int_{B_\eps(x)}\f{d^2(u(x),u(y))}{\eps^2}d\mu(y).
\]

Define $\Om_\eps=\{x\in \Om|d_{cc}(x,\pa\Om)\geq\eps)\}$.

\begin{claim}\lab{clm4.1}
The function $e_\eps$ is integrable on $\Om_\eps$.
\end{claim}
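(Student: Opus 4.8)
The plan is to reduce the integrability of $e_\eps$ to the standing $L^2$-hypothesis on $u$ (in the Korevaar--Schoen sense, $\int_\Om d^2(u(z),Q)\,d\mu(z)<\infty$ for some fixed $Q\in N$) by means of the triangle inequality, exploiting the homogeneity of the Heisenberg measure to trivialise the normalising factor $\mu(B_\eps(x))^{-1}$. Since $\eps$ is fixed, the constant $\eps^{-2}$ is irrelevant to finiteness, so it suffices to bound
\[
I:=\int_{\Om_\eps}\f1{\mu(B_\eps(x))}\int_{B_\eps(x)}d^2(u(x),u(y))\,d\mu(y)\,d\mu(x).
\]
First I would apply the elementary bound $d^2(u(x),u(y))\le 2d^2(u(x),Q)+2d^2(u(y),Q)$, which splits $I$ into a diagonal term and a cross term. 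Note also that for $x\in\Om_\eps$ one has $B_\eps(x)\subset\Om$ (a point within cc-distance $<\eps$ of $x$ cannot lie outside $\Om$ since $d_{cc}(x,\pa\Om)\ge\eps$), so every inner integral stays inside the domain of $u$.

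For the diagonal term the inner integral is $\int_{B_\eps(x)}d^2(u(x),Q)\,d\mu(y)=d^2(u(x),Q)\,\mu(B_\eps(x))$, which cancels the normalising factor exactly and leaves $2\int_{\Om_\eps}d^2(u(x),Q)\,d\mu(x)<\infty$. The decisive step is the cross term. Here I would first record that, since the Lebesgue measure on $\H$ is the bi-invariant Haar measure and the cc-balls satisfy $B_\eps(x)=\tau_x\,\delta_\eps(B_1(0))$ by left-invariance and the dilation structure, the volume $\mu(B_\eps(x))=\eps^{2n+2}\mu(B_1(0))=:V_\eps$ is \emph{independent} of the centre $x$. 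With this constancy the factor $V_\eps^{-1}$ pulls out of the integral, and I would invoke Tonelli's theorem (all integrands are nonnegative) together with the symmetry $y\in B_\eps(x)\iff x\in B_\eps(y)$ to interchange the order of integration.

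After swapping, the inner integral over $x$ becomes
\[
V_\eps^{-1}\int_{\Om_\eps}\mathbf{1}_{B_\eps(y)}(x)\,d\mu(x)=V_\eps^{-1}\,\mu\big(\Om_\eps\cap B_\eps(y)\big)\le V_\eps^{-1}\mu(B_\eps(y))=1,
\]
so the cross term is controlled by $2\int_\Om d^2(u(y),Q)\,d\mu(y)<\infty$. Combining the two bounds gives $I<\infty$, hence $\int_{\Om_\eps}e_\eps\,d\mu<\infty$, which is the claim.

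I expect the only genuine obstacle to be the uniform control of $\mu(B_\eps(x))^{-1}$ needed to carry the cross term. On a general metric measure space this would require a doubling or measure-contraction estimate --- exactly where MCP$(0,2n+3)$ would enter --- but on $\H$ the exact homogeneity of the volume makes $\mu(B_\eps(x))$ literally constant in $x$, so the argument collapses to the clean computation above. The remaining points, namely the Tonelli interchange and the inclusion $B_\eps(x)\subset\Om$ for $x\in\Om_\eps$, are routine.
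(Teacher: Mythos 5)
Your argument is correct, but it takes a genuinely different route from the paper's. The paper's proof addresses only the normalising factor: it invokes the Nagel--Stein--Wainger estimates \cite{nagel1985balls}, which give, on a compact set $K$ with $\Om_\eps\subset K\Subset\Om$, two-sided bounds $C_1(K)\Lambda(x,\eps)\leq\mu(B_\eps(x))\leq C_2(K)\Lambda(x,\eps)$ with $\Lambda(x,\eps)$ a polynomial in $\eps$ whose coefficients are bounded away from zero, so that $1/\mu(B_\eps(x))$ is uniformly bounded on $\Om_\eps$; the splitting of $d^2(u(x),u(y))$ and the Fubini step are left entirely implicit there. You instead supply exactly that analytic content --- the bound $d^2(u(x),u(y))\leq 2d^2(u(x),Q)+2d^2(u(y),Q)$, Tonelli, and the symmetry $y\in B_\eps(x)\iff x\in B_\eps(y)$ --- and you trivialise the normalising factor via the exact homogeneity $\mu(B_\eps(x))=\eps^{2n+2}\mu(B_1(0))$ coming from left-invariance of $d_{cc}$ and the dilation scaling of Lebesgue (Haar) measure. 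Your version is more self-contained and more explicit about what the claim actually requires, namely the standing Korevaar--Schoen hypothesis $u\in L^2(\Om,N)$, without which the statement is false for a merely Borel map; the paper's version buys generality: Claim~\ref{clm4.1} is stated for a bounded domain in an \emph{arbitrary} sub-Riemannian manifold (the specialisation to $\H$ only occurs in Section~5), where ball volumes are not constant in the centre and one genuinely needs the NSW bounds. That said, your proof survives this generalisation with only cosmetic changes: in the cross term, replace the constant $V_\eps$ by the two-sided bounds $c_\eps\leq\mu(B_\eps(x))\leq C_\eps$ valid on $\Om_\eps$ for fixed $\eps$, which is precisely what the paper's citation provides --- your closing remark about doubling/MCP already anticipates this, so no real gap remains.
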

\begin{proof}
The integrability of $e_\epsilon$ follows from the fact that the volume of the metric ball in $\Omega$ is of polynomial growth with a constant depending on a chosen compact set, but not on the particular choice of the centre, see \cite{nagel1985balls}. Precisely, let $K\Subset \Om$ be chosen so that $\Om_\eps\subset K$, then there exist two positive constants such that
\[
C_1(K)\Lambda(x,\eps)\leq\mu(B_\eps(x))\leq C_2(K)\Lambda(x,\eps),
\]
where $\Lambda(x,\eps)$ is a polynomial in the variable $\eps$ with smooth coefficients bounded away from zero. In particular, 
\[
\int_{\Om_\eps} \f1{\Lambda(x,\eps)}d\mu(x)<\infty.
\]
\end{proof}

\subsection{A key lemma}
We define a family of measures $d\mu_\eps=e_\eps(x)dx$ and the corresponding approximate energy functional 
\[
E_\eps(f)=\int_\Om f(x)d\mu_\eps(x) \quad \forall f\in C_0(\Om).
\]
We want to show that the approximate energy functional converges to some certain functional, which will be called the energy functional. The convergence depends on the following fact, called the sub-partition lemma, which is the analogue of the fact that refining a partition of a curve will increase the length. For any $\eps>0$ sufficiently small, let $f\in C_0(\Om)$, and $\omega(f,2\eps)$ be the oscillation of $f$ over the metric ball $B_{2\eps}(x)$.

\begin{lem}[Sub-partition Lemma]\label{suplem}
There exists a constant $C>0$, such that 
\be
E_\eps(f)\leq\sum_i\lambda_i(E_{\lambda_i\eps}(f_\eps^C)),
\qe
where $\sum_i\lambda_i=1, \lambda_i>0$ and $f_\eps^C=(1+C\eps)(f+\omega(f,2\eps))$.
\end{lem}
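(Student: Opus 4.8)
The plan is to reduce the statement to a \emph{pointwise} subdivision estimate in the target, and then to transport it to the desired scale by a change of variables in the domain governed by the dilation structure of $\H$. Fix a sub-partition $\lambda_1,\dots,\lambda_k>0$ with $\sum_i\lambda_i=1$ and set the cumulative weights $t_0=0$, $t_j=\lambda_1+\cdots+\lambda_j$, so $t_k=1$. For a pair $(x,y)$ with $y\in B_\eps(x)$, put $\rho=d_{cc}(x,y)<\eps$ and let $\gamma_{x,y}\colon[0,1]\to\H$ be the constant-speed cc-geodesic from $x$ to $y$ (unique for a.e.\ pair, since $\H$ admits no strictly abnormal minimizers). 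Setting $z_j=z_j(x,y)=\gamma_{x,y}(t_j)$ gives $d_{cc}(z_{j-1},z_j)=\lambda_j\rho<\lambda_j\eps$, i.e.\ $z_j\in B_{\lambda_j\eps}(z_{j-1})$. The geometric heart is the bound obtained from the triangle inequality $d(u(z_0),u(z_k))\le\sum_j d(u(z_{j-1}),u(z_j))$ in $N$ followed by Cauchy--Schwarz with weights $\lambda_j$:
\[
\frac{d^2(u(x),u(y))}{\eps^2}\le\sum_{j=1}^k\lambda_j\,\frac{d^2(u(z_{j-1}),u(z_j))}{(\lambda_j\eps)^2}.
\]
Notice this step uses only that $N$ is a length space, not the full $CAT(0)$ hypothesis.

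Next I would integrate this inequality against $\dfrac{f(x)}{\mu(B_\eps(x))}\,d\mu(y)\,d\mu(x)$ over $\{y\in B_\eps(x)\}$. The left-hand side is exactly $E_\eps(f)$, so it remains to bound each term
\[
J_j:=\int_\Om\frac{f(x)}{\mu(B_\eps(x))}\int_{B_\eps(x)}\frac{d^2(u(z_{j-1}),u(z_j))}{(\lambda_j\eps)^2}\,d\mu(y)\,d\mu(x)
\]
by $E_{\lambda_j\eps}(f_\eps^C)$. For this I would change variables $(x,y)\mapsto(x',y')=(z_{j-1},z_j)$, recovering $(x,y)$ by extending the geodesic segment from $x'$ to $y'$ in both directions. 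In the Euclidean model this map is linear, with block determinant $\lambda_j^{Q}$ where $Q=2n+2$ is the homogeneous dimension, so that $d\mu(x)\,d\mu(y)=\lambda_j^{-Q}\,d\mu(x')\,d\mu(y')$; together with $\mu(B_{\lambda_j\eps}(x'))=\lambda_j^{Q}\mu(B_\eps(x'))$ this converts $J_j$ precisely into
\[
\int_\Om\frac{f(x)}{\mu(B_{\lambda_j\eps}(x'))}\,\frac{d^2(u(x'),u(y'))}{(\lambda_j\eps)^2}\,d\mu(y')\,d\mu(x'),
\]
that is, into $E_{\lambda_j\eps}$ with the center weight read off at $x$ rather than at $x'$.

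Two families of errors then produce $f_\eps^C$. First, $x=\gamma_{x,y}(0)$ satisfies $d_{cc}(x,x')=t_{j-1}\rho<\eps$, and all relevant points lie in $B_{2\eps}(x')$, so $f(x)\le f(x')+\omega(f,2\eps)$, which accounts for the additive oscillation term. Second, in the genuine sub-Riemannian setting the Jacobian of the subdivision map equals $\lambda_j^{Q}(1+O(\eps))$, and, by Claim~\ref{clm4.1} together with the Nagel--Stein--Wainger polynomial growth, $\mu(B_{\lambda_j\eps}(x'))$ agrees with $\lambda_j^{Q}\mu(B_\eps(x))$ up to a factor $1+O(\eps)$; these multiplicative deviations are absorbed into the prefactor $1+C\eps$. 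Summing over $j$ and using $\sum_j\lambda_j=1$ gives $E_\eps(f)\le\sum_i\lambda_i E_{\lambda_i\eps}(f_\eps^C)$.

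\emph{The main obstacle} I expect is precisely the change-of-variables estimate in the sub-Riemannian setting: controlling both the geodesic subdivision map and the volume ratio to within $1+O(\eps)$, \emph{uniformly} over centers. In the Riemannian case this is immediate from smoothness of the exponential map and of the volume form, but on $\H$ one must instead exploit left-invariance of Haar measure, the dilation scaling $\mathcal{L}^{2n+1}(\delta_\eps(E))=\eps^{2n+2}\mathcal{L}^{2n+1}(E)$, and the doubling/MCP property to render the $O(\eps)$ error independent of the base point. The remaining technicalities—measurability of $(x,y)\mapsto(z_{j-1},z_j)$ away from the cut locus, and keeping the intermediate points $z_j$ inside $\Om$—are routine consequences of the compact support of $f$ and the smallness of $\eps$.
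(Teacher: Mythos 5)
Your proposal is correct and follows essentially the same route as the paper's proof: both adapt Korevaar--Schoen's subdivision argument by partitioning the cc-geodesic from $x$ to $y$ at the cumulative weights, applying the triangle inequality in $N$ together with a weighted quadratic-mean estimate (your pointwise Cauchy--Schwarz is equivalent to the paper's $L^2$-Minkowski step), and then changing variables $(x,y)\mapsto(z_{i-1},z_i)$ with the volume distortion controlled by the Measure Contraction Property, the oscillation and multiplicative errors being absorbed into $f_\eps^C$. The step you flag as the main obstacle --- uniform control of the volume ratio under the subdivision map --- is exactly where the paper invokes the MCP bound $\theta^4(1+D\eps)$ following Sturm, which is the rigorous substitute for your dilation-Jacobian heuristic.
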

\medskip
For the proof of this lemma, we will recall a measure property due to K-T. Sturm \cite{sturm1998diffusion}; more applications of this property have also appeared in the works K. Kuwae~\cite{kuwae2001generalized} and S. Ohta~\cite{ohta2007measure}, and references therein. For any $x,y\in\Om$, let $\Phi_t(x,y), t\in[0,1]$ be the minimal horizontal geodesic connecting $x,y$, which is measurable as a two variables map \footnote{The choice of the geodesic is however not unique, but we may pick one of them such that the resulting map is measurable.}. 
\begin{defn}
A metric measure space $(\Om,\rho,\mu)$ is said to satisfy the Measure Contraction Property with exceptional set $Z$ if and only if there is a closed set $Z\subset\Om$ with $\mu(Z)=0$, such that for each compact set $Y\subset \Om\setminus Z$, there exist numbers $R>0, \cta<\infty$ and measurable
maps $\Phi : X \times X \rightarrow X$ for all $t\in[0,1]$ with the following property: \medskip

\begin{enumerate}[label=(\roman*)]    
    \item  For $\mu$-a.e. $x,y\in Y$ with $d_{cc}(x,y)<R$ and all $s, t\in [ 0, 1] $,
            \[
            \Phi_{0}(\:x,\:y)\:=\:x,\quad\Phi_{t}(\:x,\:y)\:=\:\Phi_{1-\:t}(\:y,\:x)\:\]
            \[
            \Phi_s(x,\Phi_t(x,y))=\Phi_{st}(x,y),
            \]
            $$
            d(\Phi_s(x,y),\Phi_t(x,y))\leq\cta|s-t|d(x,y);
            $$

    \item  For all $r<R$, $x\in Y$, all measurable subsets $A\subset B_r(x)\cap Y$ and         all $t\in[0,1]$, there holds
            \bel{inq4.7}
                \f{\mu(A)}{\mu(B_r(x))}\leq \cta\f{\mu(\Phi_t(x,A))}{\mu(B_{tr}(x))}.
            \qe
\end{enumerate}
\medskip
We say that $(\Om,\rho,\mu)$ satisfies the strong Measure Contraction Property
with exceptional set $Z$ if and only if for each compact $Y\subset \Om\setminus Z$, the constants $\cta$ can be chosen arbitrarily close to 1 and if the measure $\mu$ satisfies the doubling condition locally. If the above conditions are satisfied with $Z=\emptyset$, then we say that $(\Om,\rho,\mu)$
satisfies the (weak or strong, respectively) Measure Contraction Property without exceptional set.
\end{defn}
\begin{rem}
Assume there is a constant $C>0$ such that for all $r<R$, for $x\in Y\, a.e.$, and $y\in\Om$ with $\rho(x,y) < \cta \cdot r$, we have 
\bel{inq4.8}
\f1C\leq\f{\mu(B_r(x))}{\mu(B_r(y))}\leq C.
\qe
Then inequality \rf{inq4.7} is equivalent to the condition \rf{inq4.8}. 
\end{rem}
\begin{rem}
The map $\Phi_t$ obviously maps the ball $B_r(x)$ into the ball $B_{tr}(x)$. Roughly speaking the condition gives a control of the distortion of the volume of subsets under the map $\Phi_t$ by the inequality \rf{inq4.8}.
\end{rem}
\begin{rem}
There are some examples that do satisfy the Measure Contraction Property. K-T. Sturm gives an example which fits in our setting \cite{sturm1998diffusion}. Namely, he considers the triple $(\mathbb{R}^2,d_a,dx)$ with a given symmetric matrix
\[a(x)=
\begin{pmatrix}
1 & 0 \\ 0 & \phi^2(x)
\end{pmatrix}
\]
where $\phi$ is a smooth positive function with only discrete zeros $Z_1$. Set $Z=\R\times Z_1$, then $a$ is invertible on $\mathbb{R}^2\setminus Z$ and $\mathcal{L}^2(Z)=0$. Defining the length of a curve $\gamma$ by $l(\gamma)=\int_0^1\sqrt{\dot{\gamma}a^{-1}\dot{\gamma}}dt$, one can introduce the metric 
\[
d_a(x,y)=\inf\{l(\gamma)|\gamma \:\text{is a horizontal curve connecting}\; x \,\text{to}\, y\}.
\]
Endowed with this metric, it is shown that the space satisfies the strong MCP with exceptional set $Z$. In general, we may consider higher dimensional analogues.
\end{rem}
\begin{rem}
The Heisenberg group is another example satisfying the strong MCP without exceptional set \cite{juillet2009geometric}. There are actually more examples, L. Rizzi proved this property for carnot groups \cite{rizzi2016measure}. Recently, the MCP property was proved to be valid in analytic sub-Riemannian manifolds \cite{badreddine2020measure}
\end{rem}
\begin{proof}[Proof of Lemma \ref{suplem}]
  Employing the Measure Contraction Property alongside a standard formula for chamge of variables, we adapt the proof of \cite[Lemma~1.3.1]{korevaar1993sobolev} to our setting. 
  For a point $x$ within the domain of $f$ and a small $\varepsilon$, we select a geodesic connecting $x$ to $y$ such that $|x-y| = \varepsilon$. The choice of the geodesic is arbitrary, as any will suffice. Denote $\psi: [0,1] \to \Omega$ as this chosen geodesic trajectory with uniform velocity from $x$ to $y$. Construct a sub-partition as follows:
  \[
  \begin{aligned}
    z_0 &= x, \\
    z_i &= \psi\left(\sum_{k=1}^i \mu_k\right), \quad \text{for } i=1, \ldots, m.
  \end{aligned}
  \]
  Consequently,
  $$
  |z_i - z_{i-1}| = \mu_i \varepsilon,
  $$
  and by using the triangle inequality, we have
  $$
  d(u(x), u(y)) \leq \sum_{i=1}^m d(u(z_{i-1}), u(z_i)).
  $$
  Applying the generalised $L^2$-triangle inequality for $(fe_\varepsilon)$, we get
  \[   
    E_\varepsilon(f)^{\frac{1}{2}} \leq \sum_{i=1}^m \mu_i \left( \underset{|z_i - z_{i-1}| \leq \mu_i \varepsilon}{\operatorname*{\int\int}} f(x) e_{\mu_i\varepsilon}(z_{i-1}, z_i) \, d\sigma_{\varepsilon}(x, y) \right)^{\frac{1}{2}}.
  \]
  For the integral representation, we define
  \[
  e_\varepsilon(x,y) = \frac{d^2(u(x),u(y))}{\varepsilon^2},
  \]
  and the measure $d\sigma_{\varepsilon, x}(y)$ over the set $B(x,\varepsilon) = \{ y \mid d_{cc}(x,y) < \varepsilon \}$ as
  \[
  d\sigma_{\varepsilon, x}(y) = \frac{d\mu_y}{\sqrt{\mu(B_r(x))}}.
  \]
  Applying the MCP, we transform variables from $(x,y)$ to $(z_{i-1}, z_i)$
 in the above inequality. The integral transformation is expressed as
  \[
  d\sigma_{\varepsilon, x}(y) = \frac{d\sigma_{\varepsilon, x}(y)}{d\sigma_{\mu_i \varepsilon, z_{i-1}}(z_i)} d\sigma_{\mu_i \varepsilon, z_{i-1}}(z_i).
  \]
  The MCP then gives the bound
  $$
  \frac{d\sigma_{\varepsilon, x}(y)}{d\sigma_{\mu_i \varepsilon, z_{i-1}}(z_i)} \leq \cta^4(1 + D\varepsilon).
  $$
  We refer to \cite{sturm1998diffusion} for further details on the steps involving the MCP property.
\end{proof}
With these examples in hand and the proof of the sub-partition lemma in \cite{sturm1998diffusion}, we may conclude that on a sub-Riemannian space satisfying the MCP, we can define a suitable energy functional, and then an energy measure with the help of a Riesz representation given by 
\[
E(f)=\lim_{\eps\rightarrow 0}E_\eps(f)=\int_\Om f\,de,\quad for \; f\in C_0(\Om).
\]
We define the energy of an $L^2(\Om,N)$ map $u$ by 
\bel{inq1.24}
E^u=\sup_{\begin{subarray}{} 0\le f\le 1\\
f\in C_0(\Om)\end{subarray}}\lim_{\eps\rightarrow0}E_\eps(f).
\qe
\begin{defn}(The Sobolev Space)
We say that a map $u:\Om\rightarrow N$ is in the Sobolev space $H^1(\Om,N)$, if $u\in L^2(\Om,N)$ and \rf{inq1.24} is finite, in this case, we define its $H^1$ norm by 
\[
\norm{u}_{H^1}=\sqrt{E}+\norm{u}_{L^2}.
\]
\end{defn}
Here it seems to arise an interesting question: Is the generalised curvature-dimension condition due to F. Baudoin \and N. Garofalo\cite{baudoin2016curvature} equivalent to the Measure Contracting Property?

\subsection{The directional energy}
Next we want to show that the energy measure $de$ obtained with the above method is indeed absolutely continuous with respect to the Lebesgue measure.
To see this, we need to study the approximate directional energy functional. At a first glance, we may look at the directional energy density $\f1\eps d(u(x),u(x+\eps X_j))$. However, in sub-Riemannian geometry $X_j$ may contain zeroes, so the flow is possibly trivial. Hence we look at unit vector fields contained in the horizontal subspace at each point. Along this direction, we introduce the following notation: Let $V(x)$ denote the subspace spanned by the horizontal vector fields, which we may call the horizontal subspace, that is, 
\[
V(x)=\text{span}\set{X_j(x)}.
\]
And also let $q(x)$ be the dimension of the subspace $V(x)$. Choose any constant vector $\om\in V(x)$. If $q(x)=q$ is a constant function (this is the case for example the Heisenber gruop), then any two vectors $\om_1\in V(x), \om_2\in V(y)$ differ only by a rotation $R\in SO(q)$, hence we introduce the following directional approximate energy density function 
\[
e_\eps^\om(x)=\f{d^2(u(x),u(x+\eps\om))}{\eps^2}. 
\]

\begin{rem}
Here we can only formulate the directional approximate energy function in the direction $\om\in V(x)$, because the distance function in the sub-Riemannian setting is non-isotropic, in other words, the distance is not equivalent to the Euclidean distance.
\end{rem}
The function $e_\eps^\om$ is well-defined, i.e. it is integrable, which can be shown in a similar way as in Claim \ref{clm4.1}; more than that, the volume of the metric ball is of polynomial growth. Therefore the approximate energy functional can be defined as follows.
\[
E_\eps^\om(f)=\int_\Om fe_\eps^\om dx, \quad f\in C_0(\Om),
\]
and use the subpartition lemma \ref{suplem} to derive a suitable directional energy functional, which we may denote by $E^\om$, that is, 
\[
E^\om (f)=\int_\Om f(x)de^\om.
\]
where $de^\om$ is the limit of the approximate energy density function.

\subsection{Energy density function}
Assume now that $u\in H^1(\Om,N)$. Firstly, we outline the steps to prove the absolute continuity of the approximate directional energy functional, where we take inspiration from several lemmata in \cite{korevaar1993sobolev}\footnote{The absolute continuity is also contained in \cite{guo2021harmonic}, but there is no point-wise convergence of the approximated energy density.}.

\vspace{0.3cm}

\noindent \textbf{Step 1} \textit{Study the map $u$ restricted to the integral flow $\gamma_x$ generated by some vector field $\om_j$ at a point $x$.} For any fixed vector field $\om_j$, let $\bar{x}_j(x,t)$ be the family of transformations generated by $\om_j$ at the point $x$, namely, 
\[
\dfrac{d}{dt}\bar{x}_j(x,t)|_{t=0}=\om_j(\bar{x}_j(x,t)),\quad \bar{x}_j(x,0)=x.
\]
 
Define the directional approximate energy density function 
\bel{inq1.25}
e_\eps^j(x)=\f{d^2(u(x),u(\bar{x}_j(x,t)))}{\eps^2}.
\qe
For simplicity, we omit the dependence of the vector $\om_j$, and denote the quantity by $e_\eps$, and also set $\ga_x(t)=\bar{x}_j(x,t)$. We need to show that $u|_{\gamma_x}$ is a $H^1(I_x,N)$-map for almost all $x\in\Om$.\foo{Here we need to clarify the exact meaning of a map restricted on an interval, this can be illustrated by the probability measure on the continuous curves. For example see \cite{heinonen2015sobolev}.} The goal then is to prove the absolute continuity of the directional energy density with respect to the Lebesgue measure. 
\begin{proof}[Proof of step 1]
We need to show that\foo{With a slight abuse of notation, the energy functional here is defined on a one-dimensional interval.}
\[
\sup_{\begin{subarray}{}0\leq f\leq 1 \\ f\in C_0(I_x)
\end{subarray}}
\limsup_{\eps\rightarrow0}E_\eps(f)=E<\infty.
\]
Let $f\geq0$ be a continuous function supported on $I_x$. We have
\begin{align*}
    \int_\Om E_\eps(f)d\mu = & \int_\Om \int_{I_x}f(t)\Big(\f{d^2(u(\ga_x(t+\eps)),u(\ga_x(t)))}{\eps^2}+
    \f{d^2(u(\ga_x(t-\eps)),u(\ga_x(t)))}{\eps^2}\Big)\\
    \leq & \int_\Om \int_{I_x}f(t)\Big(\f{d^2(u(\ga_y(\eps)),u(y))}{\eps^2}+\f{d^2(u(\ga_y(-\eps)),u(y))}{\eps^2}\Big)J\phi_t^{-1}dtd\mu\\
    \leq & C_t\int_{I_0}\int_\Om f \Big(\f{d^2(u(\ga_y(\eps)),u(y))}{\eps^2}+\f{d^2(u(\ga_y(-\eps)),u(y))}{\eps^2}\Big)d\mu dt\\
    \leq & C_t\abs{I_0} E+o(\eps),
\end{align*}
where $\phi_t(x)=\ga_x(t)$ and we assume $I_x$ is uniformly bounded, i.e. $I_x\subset I_0, \forall x\in \Om$. In the second line of the inequality, we used spherical average to approximate the energy functional and the following simple fact involving the change of variables $y=\ga_x(t)$.
\begin{lem}
Let $\phi(t,x)$ be the flow generated by the vector field $\om_j$,
that is, 
\[
\dfrac d{dt}\phi(t,x)=\om_j(\phi(t,x)), \quad \phi(0,x)=x.
\]
Then for each $t$, the family $\phi_t(x)=\phi(t,x)$ is a diffeomorphism from a neighbourhood of $x$ into itself. Moreover, the Jacobian $J\phi_t$ of the map $x\mapsto\phi_t(x)$,  and its inverse $J\phi_t^{-1}$ are bounded. 
\end{lem}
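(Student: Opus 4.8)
The plan is to invoke the classical theory of flows of smooth vector fields, with the quantitative Jacobian bounds coming from the variational (linearised) equation together with Gr\"onwall's inequality. First I would record that each horizontal frame field $\om_j$ is smooth, hence locally Lipschitz. By the Picard--Lindel\"of existence-uniqueness theorem, combined with the theorem on smooth dependence on initial conditions, the flow $\phi(t,x)$ exists and is jointly smooth on a neighbourhood of $\{0\}\times\Om$. Since the preceding analysis only uses small times $t$ (of order $\eps$) and works over a fixed compact set $K\Subset\Om$ as in Claim~\ref{clm4.1}, the flow is defined for all $x\in K$ and all $|t|\le t_0$ for some $t_0>0$, so all subsequent bounds can be taken uniform in $x$.

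Next I would establish the diffeomorphism claim from the group law. The flow property $\phi_s\circ\phi_t=\phi_{s+t}$ together with $\phi_0=\mathrm{id}$ shows that $\phi_t$ admits the two-sided smooth inverse $\phi_{-t}$; hence each $\phi_t$ is a diffeomorphism of a neighbourhood of $x$ onto its image, which is the first assertion. For the Jacobian estimates I would differentiate the defining ODE in the space variable to obtain the variational equation for $M(t)=D_x\phi_t(x)$,
\[
\frac{d}{dt}M(t)=D\om_j(\phi_t(x))\,M(t),\qquad M(0)=I.
\]
Because $\om_j$ is smooth and $\phi_t(x)$ remains in a fixed compact set for $|t|\le t_0$, the matrix $D\om_j$ is bounded there, say $\|D\om_j\|\le L$; Gr\"onwall's inequality then yields $\|M(t)\|\le e^{L|t|}\le e^{Lt_0}$, which bounds $J\phi_t$. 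Applying the identical argument to the inverse flow $\phi_{-t}$ bounds $J\phi_t^{-1}$. If one instead reads $J\phi_t$ as the Jacobian determinant, Liouville's formula $\det M(t)=\exp\!\big(\int_0^t \mathrm{div}\,\om_j(\phi_s(x))\,ds\big)$ gives the bound directly, both above and below away from zero.

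The statement is essentially classical ODE theory, so I do not expect a genuine obstacle; the only point requiring care is \emph{uniformity in $x$}. One must ensure that the flow is defined, and that the derivative bounds $L$ and the time threshold $t_0$ can be chosen uniformly, over the relevant range of base points and for the small times $t\sim\eps$ appearing in the change-of-variables $y=\ga_x(t)$ above. This is secured precisely by restricting to a compact subset $K\Subset\Om$, on which the continuous quantities $\|D\om_j\|$ and $\mathrm{div}\,\om_j$ attain finite bounds, so that the Gr\"onwall and Liouville estimates produce constants $C_t$ depending only on $K$ and $t_0$, as needed for the preceding integral comparison.
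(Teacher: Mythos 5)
Your proof is correct, and there is nothing in the paper to compare it against: the paper states this lemma without any proof, presenting it as a ``simple fact'' inside the proof of Step~1 to justify the change of variables $y=\ga_x(t)$. Your write-up supplies exactly the standard argument one would expect to fill that gap --- Picard--Lindel\"of with smooth dependence for existence and smoothness of the flow, the group property $\phi_s\circ\phi_t=\phi_{s+t}$ for invertibility, and the variational equation with Gr\"onwall (or Liouville's formula for the determinant) for the two-sided Jacobian bounds. Your closing point about uniformity over a compact set $K\Subset\Omega$ and times $|t|\le t_0$ is precisely what the application demands, since the constant $C_t$ in the Step~1 integral estimate must be independent of the base point $x$; the only standing hypothesis you rely on, smoothness of the unit horizontal field $\om_j$, is consistent with the paper's setting (on the Heisenberg group the frame $X_i,Y_i$ is smooth and non-vanishing).
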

\end{proof}

\noindent\textbf{Step 2} \textit{Similar to \cite[Lemma~1.9.2]{korevaar1993sobolev}, we need to find a candidate of the energy density function on the interval $I_x$}. It is to be expected to study the $\R$-valued function 
\[
v(s)=d(u(t+s),u(t)),
\]
which is in the space $H^1(I_x-t,\R)$, hence H\"older continuous. Therefore, the Newton-Leibniz formula gives a desired measure $de_p$ satisfying $\abs{\n_sv}ds\leq de_p$. In particular, we have $\abs{v'(s)}ds\leq e_1(t)dt$.
\begin{proof}[Proof of step 2]
First we need to verify that the function $v$ is in the space $H^1(I_x)$. This can be seen as a corollary of the following fact. 
\begin{lem}
Let $v_1, v_2\in H^1(\Om,N)$ be two maps with finite energy. Let $h:N\times N\rightarrow\R$ be a Lipschitz function with Lipschitz constant $L$. If we define the function $f(x)=h(v_1(x),v_2(x))$, then $f\in H^1(\Om)$\foo{By the Sobolev space $H^1(\Om)$ we mean the function space consisting of $L^2$ functions with bounded energy $\int_\Om\abs{Xf}^2=\int_\Om\sum_j\abs{X_jf}^2<\infty$ in the weak sense.} and 
\[
\abs{Xf}^2\leq L^2(de^{v_1}+de^{v_2}),
\]
where $de^{v_i}$ is the energy measure of the map $v_i$.
\end{lem}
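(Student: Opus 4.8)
The plan is to reduce the statement to a comparison of approximate energy densities and then pass to the limit. The only structural input is the Lipschitz bound on $h$, which lets us dominate finite differences of $f$ by finite differences of $v_1$ and $v_2$; everything else is the machinery already set up via the sub-partition Lemma~\ref{suplem} and the Riesz representation.

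First I would record the pointwise consequence of the hypothesis. Equipping $N\times N$ with the product metric $\tilde d((a,b),(a',b'))=\big(d^2(a,a')+d^2(b,b')\big)^{1/2}$, with respect to which $h$ is $L$-Lipschitz, for all $x,y\in\Om$ one has
\[
\abs{f(x)-f(y)}^2\leq L^2\big(d^2(v_1(x),v_1(y))+d^2(v_2(x),v_2(y))\big).
\]
Dividing by $\eps^2$ and averaging over $y\in B_\eps(x)$ against $\f1{\mu(B_\eps(x))}d\mu(y)$, this yields the pointwise bound on the approximate energy densities,
\[
e_\eps^f(x)\leq L^2\big(e_\eps^{v_1}(x)+e_\eps^{v_2}(x)\big)\qquad\text{for a.e. } x\in\Om_\eps,
\]
which is the heart of the argument and requires no further regularity.

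Second I would upgrade this to membership in $H^1$ and to an inequality of energy measures. For any $\phi\in C_0(\Om)$ with $0\leq\phi\leq1$, integrating against $\phi\,dx$ gives $E_\eps^f(\phi)\leq L^2\big(E_\eps^{v_1}(\phi)+E_\eps^{v_2}(\phi)\big)$. Since $v_1,v_2\in H^1(\Om,N)$ have finite energy, the right-hand side stays bounded as $\eps\to0$, so taking the supremum over such $\phi$ in the definition \rf{inq1.24} yields $E^f\leq L^2\big(E^{v_1}+E^{v_2}\big)<\infty$, whence $f\in H^1(\Om)$. With $f$ now known to lie in $H^1$, the almost-monotone convergence furnished by Lemma~\ref{suplem} guarantees $E_\eps^f(\phi)\to\int_\Om\phi\,de^f$, while $E_\eps^{v_i}(\phi)\to\int_\Om\phi\,de^{v_i}$; passing to the limit in the integrated inequality against an arbitrary nonnegative $\phi\in C_0(\Om)$ gives the inequality of measures $de^f\leq L^2\big(de^{v_1}+de^{v_2}\big)$. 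Finally, once the absolute continuity of the (directional and total) energy measures established in the preceding subsection is invoked, all three measures are absolutely continuous with respect to the Lebesgue measure, and this reduces to the claimed pointwise density inequality $\abs{Xf}^2\leq L^2\big(de^{v_1}+de^{v_2}\big)$.

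The hard part will be the passage to the limit: the limiting energy measures $de^{v_i}$ and $de^f$ are produced only weakly, as Riesz representatives of the functionals obtained after the almost-monotone convergence of Lemma~\ref{suplem}, so one must verify that the clean pointwise domination $e_\eps^f\leq L^2(e_\eps^{v_1}+e_\eps^{v_2})$ indeed survives this weak-$*$ limiting process and that the three limits being compared are the correct objects. A secondary technical point is the appeal to absolute continuity of $de^f$, needed only to phrase the conclusion pointwise; if one is content to state the result at the level of measures, this step can be omitted entirely.
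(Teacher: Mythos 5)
Your proposal is correct and takes essentially the same approach as the paper: in both, the heart of the argument is the pointwise bound $\abs{f(x)-f(y)}^2\leq L^2\big(d^2(v_1(x),v_1(y))+d^2(v_2(x),v_2(y))\big)$ coming from the Lipschitz property of $h$, after which the finiteness of the energies of $v_1,v_2$ yields $f\in H^1(\Om)$ and the stated measure inequality. The paper records only this inequality and concludes at once; the limiting details you spell out (integrating against test functions, almost-monotone convergence, absolute continuity) are exactly the machinery the paper leaves implicit.
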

\begin{proof}
This follows almost directly from the triangle inequality. Indeed, we have
\begin{align*}
    \f{\abs{f(x)-f(y)}^2}{\eps^2} & = \f{\abs{h(v_1(x),v_2(x))-h(v_1(y),v_2(y))}^2}{\eps^2}\\
    & \leq L^2\f{d^2(v_1(x),v_1(y))+d^2(v_2(x),v_2(y))}{\eps^2}.
\end{align*}
Since $v_i$ has bounded energy, we conclude that $f$ also has bounded energy. 
\end{proof}
\begin{rem}\lab{rem4.13}
In particular, if we choose $v_2\equiv y_0$, and $h(y_1,y_2)=d(y_1,y_2)$, we may conclude that 
\[
\abs {Xd(v,y_0)}^2\leq Cde^v.
\]
\end{rem}
By Remark~\rf{rem4.13}, we have that  $v\in H^1(I_x)$, so that it is H\"older continuous by the Sobolev embedding inequality\foo{Note that we use the non-isotropic distance, but in the direction $X_j$, this distance is equivalent to the Euclidean one.}. In particular $v$ is absolutely continuous, and we have the following estimate due to Newton-Leibniz formula. 
\bel{inq4.11}
v(\eps)-v(0)\leq \int_0^\eps\abs{v'(s)}ds\leq \int_t^{t+\eps}e_1(r)dr,
\qe
where $e_1$ is the energy density function of the map $u|_{\ga_x}$\foo{A better notation for this energy density function would be $e_1(t,x)$. We use the simplied term $e_1$ if there is no confusion with the point and time we are considering.}.
\end{proof}
\begin{rem}
Here the density function $e_1$ comes from the following fact: For a map $u\in H^1(\Om,N)$, it holds naturally that $u\in W^{1,p}(\Om,N)$ for $1\leq p<2$. Moreover, the energy measure $de_p$ is indeed absolutely continuous with respect to the Lebesgue measure. This can be see easily checked as follows. 

Let $E$ be the total energy of the map $u$. For any Borel set $A\subset\Om$ and any $\delta>0$, we may choose some function $f\in C_0(\Om)$ with $f\equiv1$ on $A$, and $\abs{supp(f)}\leq \abs{A}+\delta$. Then we have 
\begin{align*}
    \int_\Om f(x)e_{\eps,p}d\mu = & \int_\Om f(x)\fint_{B_\eps(x)}\f{d^p(u(x),u(y))}{\eps^p}d\mu(y)d\mu\\
    \leq & C_{\Om,p}\int_\Om f(x)e_\eps(x)^{\f p2}d\mu. 
\end{align*}
Hence 
\[
e_p(A)\leq C_{\Om,p}E^{\f p2}(\abs{A}+\delta)^{\f{2-p}2}.
\]
This shows that the energy measure $e_p$ is absolutely continuous with respect to the Lebesgue measure.
\end{rem}

\vspace{0.3cm}

\noindent \textbf{Step 3} \textit{Show that there exists a point-wise limit 
\[
\f{d(u(t+\eps),u(t))}\eps\rightarrow e_1(t),\quad as\quad \eps\rightarrow0,
\]
for almost all $t\in I_x$}.
\begin{proof}[Proof of step 3]
We follow the proof of \cite[Lemma~1.9.3]{korevaar1993sobolev} with minimal adaptation to our setting. For simplicity, we set $f_\eps(t)=\f{d(u(t+\eps),u(t))}\eps$. It suffices to show 
\[
\lim_{\eps\rightarrow0}f_\eps(t)\geq e_1(t), \quad \text{and} \quad
\lim_{\eps\rightarrow0}f_\eps(t)\leq e_1(t).
\]
The second inequality follows from the inequality \rf{inq4.11}. In fact, dividing both sides of inequality \rf{inq4.11} by $\eps$ and letting $\eps\rightarrow0$, the desired result follows from the Lebesgue differentiable theorem\foo{This theorem relies on some covering property of the metric space and doubling property of the measure, of which both hold in our setting.}. To show the first inequality, we define for any $\delta>0$ the set 
\[
S_\delta=\{t\in I_x,\textit{$t$ is a Lebesgue point for $e_1$}, \text{and} \,
\liminf_{\eps\rightarrow0}f_\eps(t)\leq e_1(t)-\delta\}.
\]
We now want to show the set $S_\delta$ is negligible. For that, let $t\in S_\delta$. By definition, for any small number $\eps_0>0$, there exists some $\eps<\eps_0$ such that the intervals $(t-\eps,t+\eps)$ covers $S_\delta$ and for which 
\[
f_\eps(t)\leq e_1(t)-\delta, \quad \text{and} \quad e_1(t)-\f1\eps \int_t^{t+\eps}e_1(r)dr\leq\f\delta2.
\]
By the 5-times covering lemma, we may find a disjoint subfamily of this covering $\bar{I}_i=(t_i-\eps,t_i+\eps)$ with $\sum_i\abs{I_i}\geq \f1{10}\abs{S_\delta}$. We set $I_i=[t_i,t_i+\eps_i)$ and the complementary intervals are denoted by $J_k=[s_k,s_k+\eps_k)$. $P=\{I_i, J_k\}$ together consists of a partition of the interval $I_x$ with maximum length bounded by $\eps_0$. We have the following estimate.
\begin{align}\lab{inq4.12}
    \Sigma & := \sum_id(u(t_i+\eps_i), u(t_i))+\sum_kd(u(t_k+\eps_k), u(t_k))\notag\\
    & \leq \sum_i\Big(\int_{I_i}e_1(r)dr-\f\delta2 \eps_i\Big)+\sum_k\int_{J_k}e_1(r)dr\notag\\
    & \leq \int_{I_x}e_1(r)dr-\f\delta {20}\abs{S_\delta}.
\end{align}
Now we can show that the summation $\Sigma$ converges to the integral $\int_{I_x}e_1(r)dr$, and we can then conclude from the inequality \rf{inq4.12} that $\abs{S_\delta}=0$.In fact, since $de_1=e_1\,dt$ is absolutely continuous, we have 
\begin{align}
    E_1=\int_{I_x}e_1(t)dt & = \lim_{\eps\rightarrow0}\int_{I_x-\eps}\f{d(u(t),u(t+\eps))}\eps dt\notag\\
    & = \lim_{\eps\rightarrow0}\int_{I_{x,\eps}}\sum_{i=0}^{[\f1\eps]-2}\f{d(u(t+i\eps),u(t+(i+1)\eps))}\eps dt\notag\\
    & := \lim_{\eps\rightarrow0}\f1\eps\int \Sigma_\eps(t)dt,
\end{align}
where $\abs{I_{x,\eps}}=\eps$. Hence for any $\eps,\delta>0$, we may choose some $t_\eps\in I_{x,\eps}$ such that $S_\eps(t_\eps)>E_1-\delta$. Let $P=\set{t_l}_{l=1}^k$ be a partition of $I_x$, subject to $N\norm{P}<\eps$ for some large enough $N$. We pick some sub-partition $Q$ of $P$ for which $t_i\in(t_\eps+i\eps-\f\eps N,t_\eps+i\eps+\f\eps N)$, $i=1,\cdots,[\f1\eps]-2$. Of course, we have 
\[
\sum_i\abs{t_\eps+i\eps-t_i}\leq \f{\abs{I_x}}N.
\]
It follows that 
\begin{align*}
    \Sigma = \sum_Pd(u(t_j),u(t_{j+1})) & \geq \sum_{Q}d(u(t_i),u(t_{i+1}))\\
    & \geq S_\eps(t_\eps)-2\sum\abs{\int_{t_i}^{t_\eps+i\eps}e_1(t)dt}\\
    & \geq \int_{I_x}e_1(t)dt-\delta-2(\f{\abs{I_0}}N)^{\f{p-1}p}(E_1)^{\f1p}.
\end{align*}
\end{proof}

\vspace{0.3cm}

\noindent \textbf{Step 4} \textit{Increase the power $p=1$ in the above argument to the power $p>1$}. We need to show that the energy measure $de$ is absolutely continuous with the Lebesgue measure, that is, $de=e(x)d\mu(x)$, and the general formula 
\bel{inq1.28}
e(x)=e_1(x)^2, \quad for \quad a.e. x\in\Om.
\qe
Furthermore, there is a representative of $u$ with
\bel{inq4.15}
\f{d^2(u(t+\eps),u(t))}{\eps^2}\rightarrow e^2_1(t),\quad as\quad \eps\rightarrow0.
\qe
In particular, the point-wise convergence \rf{inq4.15} holds for $t=0$, namely, 
\[
\f {d^2(u(\ga_x(\eps)),u(x))}{\eps^2}\rightarrow e_1^2(x), \quad as \quad\eps\rightarrow0.
\]
\begin{proof}[Proof of step 4]
As before we follow \cite[Lemma~1.9.5]{korevaar1993sobolev} minimally adapted to our setting. We need to prove the inequalities 
\[
e_1^2d\mu\leq de, \quad \text{and} \quad 
e_1^2d\mu\geq de.
\]
The first inequality can be seen from \rf{inq4.15} together with the Fatou Lemma,
\begin{align*}
    \int_\Om f(x)e_1^2d\mu = & \int_\Om \liminf_{\eps\rightarrow0}f(x) \f{d^2(u(\ga_x(\eps)),u(x))}{\eps^2}d\mu \\
    \leq & \liminf_{\eps\rightarrow0}\int_\Om f(x)\f{d^2(u(\ga_x(\eps)),u(x))}{\eps^2}d\mu\\
    \leq & \int_\Om f(x)de.
\end{align*}
To see the other side of the inequality, we fix some $f\in C_0(\Om)$, then 
\[
\int_\Om fde=\lim_{\eps\rightarrow0} \int_\Om f\f{d^2(u(\ga_x(\eps)),u(x))}{\eps^2}d\mu.
\]
It follows that 
\[
\int_\Om fde=\int_\Om f\f{d^2(u(\ga_x(\eps)),u(x))}{\eps^2}d\mu+\delta_1(\eps).
\]
However, for fixed $\eps>0$, there also holds the equality
\[
\int_\Om f\f{d^2(u(\ga_x(\eps)),u(x))}{\eps^2}d\mu= \lim_{p\rightarrow 2}\int_\Om f\f{d^p(u(\ga_x(\eps)),u(x))}{\eps^p}d\mu.
\]
We will use the following notation for the above integral: 
$$
\int_\Om f\f{d^p(u(\ga_x(\eps)),u(x))}{\eps^p}d\mu=\delta_2(p)
$$
Therefore, we have 
\bel{inq1.30}
\int_\Om fde=\int_\Om f\f{d^p(u(\ga_x(\eps)),u(x))}{\eps^p}d\mu + \delta_1(\epsilon)=\delta_2(p)+\delta_1(\eps).
\qe
We finally recall the following estimate uniformly in $p$, the proof then comes from the same line as the sub-partition lemma, since by definition
\[
E_\eps(f)\leq \int_\Om f_\eps^Ce_1^pd\mu,
\]
and the dominated convergence theorem applies. Hence we obtain the desired result by first letting $p\rightarrow2$ and then $\eps\rightarrow0$.
\end{proof}
In conclusion, we have proven the following with steps 1-4
\begin{prop}
The directional energy density is absolutely continuous with respect to the Lebesgue measure, that is, 
\[
de^\om=\abs{u_*(\om)}^pd\mu,
\]
where $\abs{u_*(\om)}$ is the density function of the energy measure of power 1 in the direction $\om$.
\end{prop}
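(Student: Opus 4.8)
The plan is to assemble the one-dimensional analysis carried out in Steps 1--4 into a statement about the full directional energy measure $de^\om$, and the key mechanism is a Fubini-type disintegration of $\Om$ along the integral curves of the flow generating $\om$. First I would fix the horizontal direction $\om\in V(x)$ and the associated unit vector field $\om_j$, and foliate $\Om$ by the flow lines $\ga_x(t)=\bar x_j(x,t)$. The Lemma on the flow $\phi_t$ guarantees that $x\mapsto\phi_t(x)$ is a diffeomorphism with $J\phi_t$ and $J\phi_t^{-1}$ uniformly bounded, so the Lebesgue measure disintegrates over these curves with bounded densities; this is exactly what makes Step 1's global estimate $\int_\Om E_\eps(f)\,d\mu\leq C_t\abs{I_0}E+o(\eps)$ valid, and it shows that $u$ restricted to $\ga_x$ lies in $H^1(I_x,N)$ for almost every $x$. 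The problem is thereby reduced to understanding the scalar energy density of a one-dimensional map, uniformly along the foliation.

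On each flow line I would then invoke the one-dimensional density $e_1$ produced in Step 2: since $v(s)=d(u(t+s),u(t))$ belongs to $H^1$ and is therefore absolutely continuous, the Newton--Leibniz estimate $\abs{v'(s)}\,ds\leq e_1(t)\,dt$ of \rf{inq4.11} isolates the candidate density, which I would take as the definition of $\abs{u_*(\om)}(x)=e_1(x)$. The decisive input is the pointwise convergence from Step 3, namely $\f{d(u(\ga_x(\eps)),u(x))}\eps\to e_1(x)$ for almost every $x$, established by the $5r$-covering argument together with the absolute continuity of $de_1=e_1\,dt$; Step 4 then upgrades this to $\f{d^p(u(\ga_x(\eps)),u(x))}{\eps^p}\to e_1^p(x)$ almost everywhere.

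To pass from pointwise convergence of the difference quotients to convergence of the measures, I would argue by a two-sided comparison exactly as in Step 4. Fatou's lemma applied to the non-negative integrands gives $\int_\Om f\,e_1^p\,d\mu\leq\int_\Om f\,de^\om$ for every $f\in C_0(\Om)$ with $f\geq0$, that is $e_1^p\,d\mu\leq de^\om$. For the reverse inequality I would use the uniform-in-$p$ bound $E_\eps(f)\leq\int_\Om f_\eps^C e_1^p\,d\mu$ coming from the sub-partition Lemma~\ref{suplem}, which furnishes a dominating function; the dominated convergence theorem then lets me send $p\to2$ and afterwards $\eps\to0$, yielding $de^\om\leq e_1^p\,d\mu$. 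Combining the two inequalities gives $de^\om=\abs{u_*(\om)}^p\,d\mu$, and in particular absolute continuity with respect to the Lebesgue measure.

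The main obstacle I anticipate is the reverse inequality $de^\om\leq e_1^p\,d\mu$, because the pointwise limit alone does not preclude concentration of the approximate measures; the whole force of the argument rests on the sub-partition Lemma~\ref{suplem} providing a bound that is uniform both in $\eps$ and in $p$, so that the interchange of the limits $p\to2$ and $\eps\to0$ is legitimate. A secondary but genuine difficulty is the measurability of the disintegration along the flow foliation and the uniform control of the Jacobians $J\phi_t^{\pm1}$, which is precisely what allows the one-dimensional densities to be integrated back up to the global measure $de^\om$ without losing absolute continuity.
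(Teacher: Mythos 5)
Your proposal takes essentially the same route as the paper: the paper's proof of this proposition is precisely the assembly of its Steps 1--4 (reduction to flow lines via the bounded-Jacobian lemma, the Newton--Leibniz candidate density $e_1$, the a.e.\ pointwise convergence of the difference quotients, and the two-sided measure comparison via Fatou's lemma plus the uniform-in-$p$ sub-partition bound with dominated convergence, letting $p\to 2$ and then $\eps\to 0$), which is exactly the structure you reproduce. Your identification of the delicate points (ruling out concentration for the reverse inequality, and the measurable disintegration with uniformly bounded Jacobians) matches where the paper itself leans on Lemma~\ref{suplem} and the flow lemma, so no genuinely different argument is introduced.
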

 
\section{\bf Calculus on the Sobolev Space}
In this section, we want to illustrate that the newly defined Sobolev space is a suitable candidate to carry usual variational analysis. For example, we have the lower semi-continuity, the B\"ochner identity and so on.  
\subsection{Lower semi-continuity}

We here verify the following fact. For target $N=\R$, we simply denote $H^1(\Omega)=H^1(\Omega,\R)$.
\begin{thm}\lab{thm5.1}
	For our new Sobolev space, there holds the semi-continuity property. Namely, let $f_i\in H^1(\Om,N)$ be a sequence of Sobolev functions with uniformly bounded Sobolev norm, then $f_i$ converges weakly to some Sobolev function $f_\infty\in H^1(\Om,N)$, and we have 
	\[
	de_\infty\leq \liminf_{i\rightarrow\infty}de_i,
	\]
	where $de_i$\foo{Distinguish this from our previous definition of the approximation directional energy density, which was denoted by $de^j$. } is the corresponding energy density functions. Moreover, the newly defined Sobolev space
	coincides with the classical one if we take $N=\R$. 
\end{thm}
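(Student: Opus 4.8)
The plan is to separate the statement into two parts: (i) the existence of a weak limit together with the lower semicontinuity $de_\infty\le\liminf_i de_i$, and (ii) the identification with the classical space when $N=\R$. Throughout I keep the competing maps named $f_i$ and reserve $\phi\in C_0(\Om)$ with $0\le\phi\le1$ for the test functions against which the energy measures are paired, so that $E^{f_i}(\phi)=\int_\Om\phi\,de_i$ and $E_\eps^{f_i}(\phi)=\int_\Om\phi\,e_\eps^{f_i}\,dx$ denote the energy and approximate energy functionals of $f_i$. First I would produce the weak limit: since the norms $\sqrt{E^{f_i}}+\norm{f_i}_{L^2}$ are uniformly bounded, say by $\Lambda$, and the target $N$ is locally compact, the uniform energy bound yields equi-control of the approximate energies, and the Rellich-type compactness of \cite{korevaar1993sobolev} gives a subsequence (not relabelled) converging in $L^2(\Om,N)$ to a map $f_\infty$; this $L^2$-convergence is exactly what ``weak convergence'' means here. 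That $f_\infty\in H^1(\Om,N)$ will follow \emph{a posteriori} from the lower semicontinuity, which bounds its total energy by $\liminf_i E^{f_i}\le\Lambda^2$.

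The heart of the matter is the lower semicontinuity, which I would obtain by interchanging the limits $\eps\to0$ and $i\to\infty$. For \emph{fixed} $\eps$ the approximate functional is continuous under $L^2$-convergence: writing $D_i(x,y)=d(f_i(x),f_i(y))$ one has $\abs{D_i-D_\infty}\le d(f_i(x),f_\infty(x))+d(f_i(y),f_\infty(y))$, so that by Cauchy--Schwarz the difference $\abs{E_\eps^{f_i}(\phi)-E_\eps^{f_\infty}(\phi)}$ is controlled by an $\eps$-dependent constant times $\norm{d(f_i,f_\infty)}_{L^2}$ against the uniformly bounded factor built from $D_i+D_\infty$, hence $E_\eps^{f_i}(\phi)\to E_\eps^{f_\infty}(\phi)$ as $i\to\infty$. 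The passage $\eps\to0$ is supplied by the almost-monotonicity encoded in the Sub-partition Lemma~\ref{suplem}, whose constant $C$ is independent of the map: taking there the refinement limit $\lambda_i\to0$ (so each $\lambda_i\eps\to0$ while $\sum_i\lambda_i=1$) gives, for every map $u$,
\[
E_\eps^{u}(\phi)\ \le\ E^{u}\big(\phi_\eps^C\big)\ =\ (1+C\eps)\Big(E^{u}(\phi)+\norm{\omega(\phi,2\eps)}_\infty\,E^{u}(1)\Big),
\]
with $\phi_\eps^C=(1+C\eps)(\phi+\omega(\phi,2\eps))$. Applying this with $u=f_i$, letting $i\to\infty$ (using the fixed-$\eps$ continuity on the left and $E^{f_i}(1)\le\Lambda^2$ on the right), and finally sending $\eps\to0$ so that $\norm{\omega(\phi,2\eps)}_\infty\to0$ by uniform continuity of $\phi$, yields
\[
E^{f_\infty}(\phi)=\lim_{\eps\to0}E_\eps^{f_\infty}(\phi)\ \le\ \liminf_{i\to\infty}E^{f_i}(\phi)
\]
for every admissible $\phi$, which is precisely $de_\infty\le\liminf_i de_i$. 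The main obstacle is exactly this limit-interchange: the approximate densities are only \emph{almost} monotone in $\eps$, and it is the Sub-partition Lemma that converts that almost-monotonicity into the clean comparison above, provided one checks that the correction factors $(1+C\eps)$ and $\omega(\phi,2\eps)$ disappear in the limit, which is where the uniform energy bound $\Lambda$ enters.

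For the final claim with $N=\R$ I would identify the two spaces by computing densities. By the Proposition closing Section~4 the directional energy measure is $de^\om=\abs{u_*(\om)}^2\,d\mu$, and for a real-valued, horizontally smooth $u$ the Pansu/Lebesgue differentiation theorem recalled in Section~2 gives $\abs{u_*(\om)}=\abs{X_\om u}$ along each horizontal direction $\om$; integrating $\abs{X_\om u}^2$ over the unit sphere of the horizontal distribution recovers, up to the normalisation absorbed in $c(n)$, the classical density $\sum_j\abs{X_j u}^2$. Hence the Korevaar--Schoen energy of a smooth function equals its classical Dirichlet energy for the \ho{} operator. For a general classical $H^1$ function I would approximate by smooth ones in the classical norm (standard for \ho{} systems) and invoke the lower semicontinuity just established to get $E^{KS}\le E^{cl}$; conversely, a map in the new space has all directional energies $\abs{u_*(\om)}^2$ in $L^1$, which via the absolute continuity $de=e_1(x)^2\,d\mu$ of Step~4 assemble into a horizontal gradient with $e(x)=c\,\abs{\nabla u}_H^2$, placing $u$ in the classical $H^1$ and matching the densities. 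The two spaces and their norms therefore coincide.
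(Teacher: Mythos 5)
Your proposal is correct and takes essentially the same route as the paper: both hinge on the Sub-partition Lemma~\ref{suplem} consequence $E_\eps^{u}(\phi)\le E^{u}(\phi_\eps^C)$, the continuity of the fixed-$\eps$ approximate energy under $L^2$ convergence, and then sending $\eps\to0$ so that the correction terms $(1+C\eps)$ and $\omega(\phi,2\eps)$ vanish. Your write-up is in fact more complete than the paper's, which uses the $L^2$-compactness of the sequence and the fixed-$\eps$ continuity only implicitly and omits the $N=\R$ identification entirely.
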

\begin{proof}[Proof of Theorem \ref{thm5.1}]
	It suffices to verify the semi-continuity property. For fixed $f\in C_0(\Om)$, we have 
	\[
	E_\eps^i(f)\leq E^i(f_\eps^C)=E^i(f)+E^i(f_\eps^C-f).
	\]
	It follows that 
	\[
	E_\eps(f)\leq \liminf_{i\rightarrow\infty}(f_\eps^C)+E(C\eps\abs{f}+\om(f,2\eps)).
	\]
	Therefore the map $u$ has finite energy, and by letting $\eps\rightarrow0$, we get 
	\[
	E(f)\leq \liminf_{i\rightarrow\infty}E^i(f).
	\]
\end{proof}
\subsection{The B\"ochner identity in Heisenberg group}
We now pay our attention back to look at the inequality \rf{inq4.5}. As before, let $\phi_\eps(x)$ be the flow generated by the vector field $\om$. By taking $y=\phi_\eps(x)$, integrating the inequality \rf{inq4.5} against a nonnegative function $\psi\in C_0(\Om)$. In the sequel, for simplicity, we denote the directional approximate energy density by $e^\om_{u,\eps}$ with respect to the map $u$, we have
\begin{align}\lab{inq1.31}
	& \int_\Om \psi e^\om_{u_\eta,\eps} d\mu+\int_\Om \psi e^\om_{u_{1-\eta},\eps} d\mu
	\leq\int_\Om \psi e^\om_{u_0,\eps} d\mu+\int_\Om \psi e^\om_{u_1,\eps} d\mu\notag\\
	- & \int_\Om\f{\eta(\phi_\eps(x))-\eta(x)}\eps \f{f(\phi_\eps(x))-f(x)}\eps(1-2\eta(\phi_\eps(x)))d\mu
\end{align}
Recall $f(x)=d^2(u_0(x),u_1(x))$, which is in the space $H^1(\Om,\R)$ by Theorem \ref{thm5.1}. We claim that
\begin{claim}
	For any $\psi\in C_0(\Om)$, 
	\bel{}
	\lim_{\eps\rightarrow0}\int_\Om \psi\f{\eta(\phi_\eps(x))-\eta(x)}\eps 
	\f{f(\phi_\eps(x))-f(x)}\eps d\mu
	=\int_\Om\psi \eta_*(\om)f_*(\om)d\mu.
	\qe
	In other words, the measure $\f{\eta(\phi_\eps(x))-\eta(x)}\eps 
	\f{f(\phi_\eps(x))-f(x)}\eps d\mu$ converges weakly to the measure $\eta_*(\om)f_*(\om)d\mu$\foo{Here for a $H^1$ function the notion $\eta_*(X_j)$ coincides with the usual weak derivative $X_j\eta$, we  use the new notation just for consistency. Moreover, the Lipschitz constant is comparable with the norm of the weak derivative.}.
\end{claim}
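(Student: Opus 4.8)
The plan is to exploit that the two difference quotients in the product have very different regularity: the factor built from the smooth cut-off $\eta$ converges \emph{strongly} (indeed uniformly), while the factor built from $f$ is only controlled in $L^2$ and converges \emph{weakly}. The product of a strongly convergent and a weakly convergent sequence converges to the product of the limits, which is exactly the assertion of the claim. Accordingly, I set
\[
g_\eps(x)=\f{\eta(\phi_\eps(x))-\eta(x)}\eps,\qquad h_\eps(x)=\f{f(\phi_\eps(x))-f(x)}\eps,
\]
and organise the proof around separating these two factors.

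First I would treat the smooth factor. Since $\eta$ is smooth and $\phi_\eps$ is the flow of $\om$, a first-order Taylor expansion along the flow gives $g_\eps=\eta_*(\om)+R_\eps$ with $\norm{R_\eps}_{L^\infty(\mathrm{supp}\,\psi)}\le C\eps$, the constant depending only on $\eta$, $\om$ and $\mathrm{supp}\,\psi$. Thus
\[
\int_\Om \psi\, g_\eps h_\eps\,d\mu=\int_\Om \psi\,\eta_*(\om)\,h_\eps\,d\mu+\int_\Om \psi\,R_\eps\,h_\eps\,d\mu.
\]
Because $f=d^2(u_0,u_1)\in H^1(\Om,\R)$ by Theorem~\ref{thm5.1}, its directional energy in the direction $\om$ is finite, and the analysis of Steps~1--4 yields a bound $\int_{\mathrm{supp}\,\psi}h_\eps^2\,d\mu\le E^\om(f)+o(1)$ uniform for small $\eps$. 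Combined with $\abs{R_\eps}\le C\eps$ and Cauchy--Schwarz, the remainder term is $O(\eps)$ and vanishes in the limit.

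The heart of the matter is showing that $h_\eps\rightharpoonup f_*(\om)$ weakly in $L^2_{loc}$. I would first establish pointwise convergence $h_\eps(x)\to f_*(\om)(x)$ for a.e.\ $x$: restricting $f$ to the flow line $\ga_x$ of $\om$, Step~1 shows $f|_{\ga_x}$ lies in $H^1$ of a one-dimensional interval, hence is absolutely continuous, so its difference quotients converge a.e.\ along the line to the a.e.-derivative, which is $f_*(\om)$; a Fubini argument over the flow foliation, using the Jacobian bounds of the flow lemma in Step~1, upgrades this to a.e.\ convergence on $\Om$. Together with the uniform $L^2$ bound just recorded, a.e.\ convergence forces weak $L^2$ convergence to the same limit $f_*(\om)$. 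Finally, since $\psi\,\eta_*(\om)\in C_0(\Om)\subset L^2(\Om)$, weak convergence of $h_\eps$ gives $\int_\Om \psi\,\eta_*(\om)\,h_\eps\,d\mu\to\int_\Om \psi\,\eta_*(\om)\,f_*(\om)\,d\mu$, and adding back the vanishing remainder completes the proof.

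The main obstacle I anticipate is this third step: $f$ is merely $H^1$, so the signed difference quotient has no a priori pointwise meaning, and one must carefully transfer the one-dimensional absolute continuity along flow lines to a statement on $\Om$---here the change-of-variables and Jacobian control from the flow lemma are essential, as is the subtlety that a.e.\ convergence together with an $L^2$ (rather than $L^\infty$) bound is what delivers the weak limit. An alternative identification of the weak limit through integration by parts would instead require tracking the $\mu$-divergence of $\om$, which is harmless in the left-invariant Heisenberg setting but not in general, so I prefer the pointwise route.
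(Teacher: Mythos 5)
Your proposal is correct, but it follows a genuinely different route from the paper. The paper proves the claim by polarization: starting from the elementary identity
\[
2(\eta(y)-\eta(x))(f(y)-f(x))=\bigl(\eta(y)+f(y)-\eta(x)-f(x)\bigr)^2-(\eta(y)-\eta(x))^2-(f(y)-f(x))^2,
\]
it takes $y=\phi_\eps(x)$, divides by $\eps^2$ and integrates against $\psi$; each of the three squares is then the approximate \emph{directional energy density} of an $H^1$ function ($\eta+f$, $\eta$, $f$ respectively), so the convergence and absolute-continuity results of Section~4 apply verbatim to each, and the cross term is identified from the algebraic identity $\abs{(\eta+f)_*(\om)}^2=\abs{\eta_*(\om)}^2+\abs{f_*(\om)}^2+2\eta_*(\om)f_*(\om)$. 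You instead factor the integrand as $g_\eps h_\eps$ and pair strong (uniform) convergence $g_\eps\to\eta_*(\om)$ with weak $L^2_{loc}$ convergence $h_\eps\rightharpoonup f_*(\om)$. The trade-off is this: the paper's argument never touches signed difference quotients --- it recycles only the unsigned, squared quotients whose convergence was already established, so it is essentially two lines given Section~4, though it does need the energy convergence for the sum $\eta+f$ as well. Your argument isolates the functional-analytic mechanism (strong times weak), needs no information about $\eta+f$, and makes clear that only $C^1$ regularity of $\eta$ is really used; but it requires an ingredient that Steps~1--4 of the paper furnish only in unsigned form, namely a.e.\ convergence of the \emph{signed} quotient $h_\eps$ to $f_*(\om)$, and you correctly supply this yourself via one-dimensional absolute continuity of $f|_{\ga_x}$ along flow lines together with the Fubini/Jacobian transfer; likewise your passage from the uniform $L^2$ bound plus a.e.\ convergence to weak $L^2$ convergence (Egorov plus absolute continuity of the integral) is a standard and valid step. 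Both routes ultimately rest on the Section~4 machinery, just on different pieces of it.
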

\begin{proof}
	We first note the following simple equality
	\begin{align}\lab{inq4.19}
		& (\eta(y)+f(y)-\eta(x)-f(x))^2\notag \\
		= & (\eta(y)-\eta(x))^2+(f(y)-f(x))^2+2(\eta(y)-\eta(x))(f(y)-f(x)).
	\end{align}
	Thus, by taking $y=\phi_\eps(x)$ and dividing both side of \rf{inq4.19} by $\eps^2$, integrating the equality against a nonnegative function $\psi$ and letting $\eps\rightarrow0$, we get 
	\[
	\abs{(\eta+f)_*(\om)}^2=\abs{\eta_*(\om)}^2+\abs{f_*(\om)}^2+2\eta_*(\om)f_*(\om).
	\]
\end{proof}
If we use the approximating energy functional, we find that
\begin{align}\lab{inq4.20}
	& E_\eps^{u_\eta}(\psi)+E_\eps^{u_{1-\eta}}  \leq E_\eps^{u_0}(\psi)+E_\eps^{u_1}(\psi)\notag\\
	- & 
	\int_\Om \psi\f{\eta(\phi_\eps(x))-\eta(x)}\eps 
	\f{f(\phi_\eps(x))-f(x)}\eps d\mu.
\end{align}
Now set $\eps\rightarrow0$ in \rf{inq4.20} and use the absolutely continuity of the directional energy measure, we finally arrive at the following inequality, 
\begin{align}
	&    \abs{{u_\eta}_*(\om)}^2+\abs{{u_{1-\eta}}_*(\om)}^2\notag\\
	& \leq   \abs{{u_0}_*(\om)}^2+\abs{{u_1}_*(X_j)}^2-\eta_*(\om)f_*(X_j)+Q(\eta,X\eta),
\end{align}
where $Q(\eta,X\eta)$ is the quadratic term involving $\eta$ and its derivatives. 
\begin{rem}
	We actually assume the minimal property of the total energy. \textbf{What is the relation between the total energy and the directional energy?} In the classical case, the total energy is the average of the directional energy over all the directions, that is, the $n-1$ dimensional sphere $S^{n-1}$. Here, since the space is non-isotropic, this seems to be not easy to verify. However, we can still derive similar results in some special settings, for example, the Heisenberg group.  
\end{rem}

In the sequel, we assume our sub-riemannian manifold to be the Heisenberg group, i.e. $M=\H$. We claim that 
\begin{lem}\lab{lem5.3}
	For any $\psi\in C_0(\Om)$, we have
	\bel{inq4.22}
	\lim_{\eps\rightarrow0}\fint_{B_\eps(p)}\f{\eta(p\cdot\delta_\eps(w))-\eta(p)}\eps\f{f(p\cdot\delta_\eps(w))-f(p)}{\eps}d\mu(w)=\eta_*(X)f_*(X)+\eta_*(Y)f_*(Y),
	\qe
	where $\eta_*(X)f_*(X)=\sum_jX_j\eta X_jf$.
\end{lem}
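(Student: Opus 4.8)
The plan is to pass from difference quotients to Pansu differentials and then exploit the rotational symmetry of the homogeneous ball in the horizontal layer to reassemble the directional contributions into the full horizontal energy. First I would compute the Pansu differential of the smooth cut-off $\eta$ explicitly. Writing $w=(x,y,t)$ and using the group law one has $p\cdot\delta_\eps(w)=(p_1+\eps x,\ p_2+\eps y,\ p_3+\eps^2t+\f\eps2(p_1\cdot y-p_2\cdot x))$, so a Taylor expansion gives
\[
\f{\eta(p\cdot\delta_\eps(w))-\eta(p)}{\eps}=\sum_j x_j\,\pa_{x_j}\eta(p)+\sum_j y_j\,\pa_{y_j}\eta(p)+\f12(p_1\cdot y-p_2\cdot x)\,\pa_t\eta(p)+O(\eps).
\]
Matching this against $X_j=\pa_{x_j}-\f12 y_j\pa_t$ and $Y_j=\pa_{y_j}+\f12 x_j\pa_t$ at the point $p$ shows that the $\pa_t$-terms reassemble exactly, so that $(P\eta)_p(w)=\sum_j x_j X_j\eta(p)+\sum_j y_j Y_j\eta(p)$, which is linear in the horizontal coordinates $(x,y)$ and independent of $t$. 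The same formal computation applied to $f$ yields the candidate $(Pf)_p(w)=\sum_j x_j X_jf(p)+\sum_j y_j Y_jf(p)$.

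Next I would justify replacing both difference quotients by their Pansu differentials inside the average. For the smooth factor $\eta$ this is immediate from the Pansu differentiability theorem quoted above, which gives $L^2(\set{\rho(w)\le1},d\sigma)$-convergence of $\f{\eta(p\cdot\delta_\eps(w))-\eta(p)}\eps$ to $(P\eta)_p(w)$ together with a uniform bound by $\norm{\eta}_{C^1}$. For $f$, which is only $H^1$, I cannot invoke smoothness; instead I would use polarization to write the product of difference quotients as a combination of the squared difference quotients of $\eta+f$ and $\eta-f$ (both $H^1$), exactly as in the preceding Claim. Each squared difference quotient converges, after averaging against $\psi$, to the corresponding squared directional density by the absolute-continuity and pointwise-convergence results of Steps 1--4 of Section 4. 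This reduces the left-hand side of \rf{inq4.22}, tested against $\psi$, to
\[
\int_\Om\psi(p)\,\fint_{\set{\rho(w)\le1}}(P\eta)_p(w)\,(Pf)_p(w)\,d\sigma(w)\,d\mu(p).
\]

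Finally I would carry out the inner integral using the symmetry of the homogeneous ball. Expanding the product gives four double sums in $x_jx_k$, $x_jy_k$, $y_jx_k$, $y_jy_k$; the decisive point --- and the place where the Heisenberg structure is needed --- is that $d\sigma$ on $\set{\rho(w)\le1}$ is invariant under the horizontal rotations of the group, whence $\fint x_jx_k\,d\sigma=\fint y_jy_k\,d\sigma=c\,\delta_{jk}$ and $\fint x_jy_k\,d\sigma=0$. All cross and off-diagonal terms vanish, leaving $c\big(\sum_j X_j\eta X_jf+\sum_j Y_j\eta Y_jf\big)$, with $c$ normalized to $1$ by the convention fixing the average (the constant $c(n)$ in the energy functional). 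Testing against arbitrary $\psi\in C_0(\Om)$ then yields \rf{inq4.22}.

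The hard part will be the passage to the limit in the \emph{product} of two difference quotients when $f$ is merely $H^1$: one cannot multiply two weakly convergent sequences and expect the product to converge. The polarization reduction to squared difference quotients is what legitimizes this, since the squares are precisely the directional energy densities whose convergence was established in Section 4. A secondary, more structural obstacle is verifying the second-moment identities $\fint x_jx_k\,d\sigma=c\,\delta_{jk}$ and $\fint x_jy_k\,d\sigma=0$ for the actual Carnot--Carath\'eodory (or gauge) ball; this is exactly the particular symmetry assumption that holds for the Heisenberg group, whose ball is invariant under the unitary rotations of the horizontal layer, forcing the second-moment matrix to be a scalar multiple of the identity.
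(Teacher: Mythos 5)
Your proposal is correct and follows the paper's overall strategy --- expand the increments to first order via the group law, multiply, average over the ball, and eliminate the cross terms through second-moment identities of the ball --- but two of your sub-arguments take genuinely different routes, and one of them improves on the paper. First, the paper establishes the moment identities \rf{eq5.7} as a separate statement, Lemma~\ref{lem5.4}, by explicitly parameterizing the Heisenberg geodesics, computing the Jacobian of the map $\Phi:S^{2n-1}\times[-2\pi,2\pi]\times[0,1]\rightarrow B$, and observing that it depends only on $\phi$, so that the horizontal integrals reduce to spherical ones; you instead obtain $\fint w_iw_j\,d\sigma=c\,\delta_{ij}$ directly from the invariance of the ball and of the Haar measure under the unitary rotations of the horizontal layer (a symmetric matrix commuting with the $U(n)$-action is scalar). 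This is cleaner and equally valid, though less explicit: the paper's computation in principle also identifies the constant $c$, which matters since \rf{inq4.22} as stated requires the second moment to be normalized to $1$ --- a normalization both you and the paper leave to the constant $c(n)$ in the energy functional. Second, and more substantively, the paper's proof Taylor-expands \emph{both} $\eta$ and $f$; since $f=d^2(u_0(x),u_1(x))$ is merely $H^1$, the expansion of $f$ is only formal there, and the paper offers no justification for it. Your polarization step --- reducing the product of difference quotients to squared difference quotients of $\eta\pm f$, exactly as in the Claim preceding the lemma via identity \rf{inq4.19}, and then invoking the directional convergence results of Section 4 --- supplies precisely the justification the paper skips, so on this point your argument is more careful than the original. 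The only place you should be more precise is the passage from the direction-by-direction limits of Section 4 (which concern flows of fixed horizontal vector fields) to the ball average: one must interchange $\lim_{\eps\rightarrow0}$ with $\fint_{B_1}d\sigma(w)$ using the uniform energy bounds coming from the sub-partition lemma, and one must check that the $O(\eps^2)$ vertical component of $\delta_\eps(w)$ contributes only $o(\eps)$ to the increment of the $H^1$ function $f$; these are refinements of your outline, not obstructions to it.
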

To show this lemma, we need to prove the following two simple identities, where we employ the symmetries of the Heisenberg group, this is the only place that we use this property. 
\begin{lem}\lab{lem5.4}
	For any $p\in\H$ and any positive number $r$, we have 
	\bel{eq5.7}
	\int_{B_r(p)}w_iw_jd\mu=0, \quad \int_{B_r(p)}w_i^2d\mu=\int_{B_r(p)}w_j^2d\mu, \quad i\neq j,
	\qe
	where $w_i$ is the i-th component of the vector $w\in B_r(p)$, $i=1,\cdots,2n$.
\end{lem}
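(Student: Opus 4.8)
The plan is to derive both identities purely from the symmetries of the Carnot--Carath\'eodory ball, which is exactly where the special structure of $\H$ is needed. First I would invoke the left-invariance of $d_{cc}$ and of the Haar measure $\mu=\mathcal L^{2n+1}$ recorded in Example~\ref{Heisenbergexample} to reduce to $p=0$. Since $B_r(p)=\tau_p(B_r(0))$ and the horizontal coordinates add under the group law, the change of variables $w=p\cdot v$ shifts $w_i\mapsto w_i+p_i$ for $i=1,\dots,2n$ while preserving $\mu$; after recentering it therefore suffices to prove
\[
\int_{B_r(0)}w_iw_j\,d\mu=0\quad(i\neq j),\qquad \int_{B_r(0)}w_i^2\,d\mu=\int_{B_r(0)}w_j^2\,d\mu,
\]
where now $w_1,\dots,w_{2n}$ are the horizontal coordinates $x_1,\dots,x_n,y_1,\dots,y_n$.

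The key point is that the unitary group $U(n)$ acts on $\H$ by isometries fixing the origin. Writing $z=x+iy\in\mathbb C^n$, the group law reads $(z,t)\cdot(z',t')=(z+z',\,t+t'+\tfrac12\,\mathrm{Im}(\bar z\cdot z'))$, so for $A\in U(n)$ the map $\Phi_A(z,t)=(Az,t)$ is a group automorphism, because $\mathrm{Im}(\overline{Az}\cdot Az')=\mathrm{Im}(\bar z\cdot z')$. I would then check that $\Phi_A$ preserves the horizontal distribution and the metric $g_S$, hence is a $d_{cc}$-isometry; as it fixes the origin it maps $B_r(0)$ onto itself, and as $A\in U(n)\subset SO(2n)$ has unit Jacobian and acts trivially on $t$, it preserves $\mu$. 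Consequently $\int_{B_r(0)} g\circ\Phi_A\,d\mu=\int_{B_r(0)} g\,d\mu$ for every integrable $g$.

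Granting this symmetry, the two identities follow at once. The conceptual route is to observe that the symmetric second-moment matrix $M_{ij}:=\int_{B_r(0)}w_iw_j\,d\mu$ is $U(n)$-invariant, i.e.\ $A^{T}MA=M$ for all $A\in U(n)$; since the standard representation of $U(n)$ on $\mathbb R^{2n}\cong\mathbb C^n$ is irreducible over $\mathbb R$, Schur's lemma forces $M=c\,\mathrm{Id}$, which is precisely the claim. Equivalently, and more concretely, I would use explicit elements of $U(n)$: multiplying one coordinate $z_a$ by $-1$ changes the sign of any $w_iw_j$ with exactly one index in the $a$-th complex plane, killing the off-diagonal integrals across distinct complex coordinates; multiplying $z_a$ by $i$ sends $(x_a,y_a)\mapsto(-y_a,x_a)$, which both annihilates $\int x_ay_a$ and gives $\int x_a^2=\int y_a^2$; and permuting two complex coordinates gives $\int x_a^2=\int x_b^2$. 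These maps suffice to move any coordinate axis to any other, covering every case.

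The only genuine difficulty --- and the reason this is the single place where the Heisenberg hypothesis is invoked --- is verifying that the relevant symmetries are the contact-preserving rotations $\Phi_A$ and not arbitrary Euclidean rotations: an $SO(2n)$ rotation outside $U(n)$, or one involving the $t$-axis, would destroy the symplectic term $\mathrm{Im}(\bar z\cdot z')$ and hence fail to be a $d_{cc}$-isometry, so the crux is the sub-Riemannian (not merely Euclidean) isometry check. A second, minor point to treat with care is the reduction to $p=0$: because left translation acts nonlinearly on the vertical variable $t$, one must confirm that only the horizontal coordinates enter the stated moments and that these transform by a pure shift, so that recentering the ball is harmless.
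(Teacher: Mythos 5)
Your proof is correct, but it takes a genuinely different route from the paper. The paper proves the lemma by brute force in geodesic polar coordinates: it writes down the explicit closed-form geodesics of $\H$ emanating from the origin, parameterizes the unit ball by a map $\Phi$ on $S^{2n-1}\times[-2\pi,2\pi]\times[0,1]$, and observes that $\det(J\Phi)$ depends only on the angular variable $\phi$ (and the radius), not on the sphere variables $(A_i,B_i)$; since the endpoint coordinates $x^i,y^i$ are linear in $(A_i,B_i)$ with rotation-covariant coefficients, the ball moments reduce to the classical Euclidean sphere moments $\int_{S^{2n-1}}A_iA_j\,d\sigma=0$, $\int A_i^2=\int B_j^2$. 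You instead exploit the symmetry directly: the maps $\Phi_A(z,t)=(Az,t)$, $A\in U(n)$, are group automorphisms (since $\mathrm{Im}(\overline{Az}\cdot Az')=\mathrm{Im}(\bar z\cdot z')$), preserve the horizontal distribution, the metric $g_S$, and Lebesgue measure, hence are $d_{cc}$-isometries fixing $0$ that map $B_r(0)$ onto itself; the explicit unitaries $z_a\mapsto -z_a$, $z_a\mapsto iz_a$, $z_a\leftrightarrow z_b$ then kill all off-diagonal moments and equalize the diagonal ones. Your route is cleaner and avoids entirely the geodesic formulas and the Jacobian computation, which the paper asserts without derivation; it also generalizes to any Carnot group admitting a sufficiently rich group of horizontal rotations (e.g.\ H-type groups), whereas the paper's argument is tied to the availability of explicit Heisenberg geodesics. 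The paper's computation, conversely, produces the polar parameterization itself, which is reusable for finer integral estimates. Two small caveats on your write-up: in the Schur's lemma variant, the commutant of $U(n)$ acting on $\mathbb{R}^{2n}$ is spanned by $\mathrm{Id}$ and the complex structure $J$, so you need the symmetry of the moment matrix $M$ to discard the $J$-component (your explicit generators make this moot); and the reduction to $p=0$ is not literally a statement that recentering is ``harmless'' for second moments --- with raw coordinates the identities are false for $p\neq 0$ because of the $p_ip_j\mu(B)$ term --- rather, the lemma must be read (as its use in Lemma~\ref{lem5.3} shows, and as the paper also tacitly does) with $w$ denoting coordinates relative to the center.
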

\begin{proof}[Proof of Lemma \ref{lem5.4}]
	The proof is a direct computation. By scaling and translation, we may assume $r=1$ and $p=0$. We can explicitly write down the geodesic in the unit ball $B$. Recall a general fact that for any $w=(x,y,z)\in B$, there exists a unique geodesic connecting $0$ and $w$, if $\abs{x}^2+\abs{y}^2\neq0$, otherwise, there are infinity many geodesics connecting $0$ and $w$. In other words, we may exclude the $z$-axis without changing the volume of the unit ball. In this case, we have for any $w_0=(x_0,y_0,z_0)$, with
	\[
	x_0^i=\f{A_i(\cos(\rho_0\phi)-1)+B_i\sin(\rho_0\phi)}\phi, y_0^i=\f{B_i(\cos(\rho_0\phi)-1)-A_i\sin(\rho_0\phi)}\phi,
	\]
	where $A_i^2+B_i^2=1, \rho_0=\rho(0,w_0),z_0\in\R$ and $\phi\in[-\f{2\pi}{\rho_0},\f{2\pi}{\rho_0}]$ is the solution of the equation
	\[
	\f{1-\cos(\rho_0\phi)}{\rho_0\phi-\sin(\rho_0\phi)}=\f{\rho_0^2}{z_0}.
	\]
	The unique geodesic connecting $0$ and $w_0$ can be parameterized by 
	\bel{eq4.24}
	\begin{gathered}
		x^i(s)=\f{A_i(\cos(s\rho_0\phi)-1)+B_i\sin(s\rho_0\phi)}\phi,\\
		y^i(s)=\f{B_i(\cos(s\rho_0\phi)-1)-A_i\sin(s\rho_0\phi)}\phi,\\
		z(s)=2\f{s\rho_0\phi-\sin(s\rho_0\phi)}{\phi^2}.
	\end{gathered}
	\qe
	where $z(1)=z_0$. To show this lemma, we define a map $\Phi:S^{2n-1}\times[-2\pi,2\pi]\times[0,1]\rightarrow B$ by setting $\Phi(A_i,B_i,\phi,s)=(x^i(s),y^i(s),z(s))$ where $(x^i,y^i,z)$ are given by \rf{eq4.24}. Since the geodesic is scaling invariant, we assume $s=1$. Thus, $w_i=x^i, w_{i+n}=y^i, i=1,\cdots,n$, we can also calculate the Jacobi of the transformation $\Phi$. Indeed, we have 
	\[
	\det(J\Phi)=2^{2n+2}\rho_0^2\left(\frac{\sin(\varphi/2)}{\varphi}\right)^{2n-1}\frac{\sin(\varphi/2)-(\varphi/2)\cos(\varphi/2)}{\varphi^{3}}.
	\]
	Since the Jacobi only involves the variable $\phi$ and we use the product metric of the space $S^{2n-1}\times[-2\pi,2\pi]$. Thus we exactly encounter the same setting as the Euclidean space. 
	
\end{proof}
We can now give the proof of Lemma \ref{lem5.3}.
\begin{proof}[Proof of Lemma \ref{lem5.3}]
	 
	We set $p=(p_1,\cdots,p_{2n},p_{2n+1}),w=(w_1,\cdots,w_{2n},w_{2n+1}))$, then 
	\[
	p\cdot\delta_\eps(w)=(p_i+\eps w_i,p_{2n+1}+\eps^2w_{2n+1}+\f12\sum_j^n\eps(p_iw_{i+n}-p_{i+n}w_i)).
	\]
	Using the Taylor expansion, we get 
	\begin{align*}
		f(p\cdot\delta_\eps(w))-f(p)=\eps\sum_{i=1}^{2n}\pa_{x_i}f w_i+\f12\eps\pa_{x_{2n+1}}f(p_iw_{i+n}-p_{i+n}w_i)+O(\eps^2).
	\end{align*}
	And similarly, we have 
	\[
	\eta(p\cdot\delta_\eps(w))-\eta(p)=\eps\sum_{i=1}^{2n}\pa_{x_i}\eta w_i+\f12\eps\pa_{x_{2n+1}}\eta(p_iw_{i+n}-p_{i+n}w_i)+O(\eps^2).
	\]
	Hence we finally obtain 
	\begin{align}\lab{eq5.8}
		& \f{\eta(p\cdot\delta_\eps(w))-\eta(p)}\eps\f{f(p\cdot\delta_\eps(w))-f(p)}{\eps}\notag\\
		= & \sum_{i=1}^{2n}\pa_{x_i}f\pa_{x_i}\eta w_i^2+\sum_{i\neq j}\pa_{x_i}f\pa_{x_j}\eta w_iw_j\notag\\
		+ & \f14\pa_{x_{2n+1}}f\pa_{x_{2n+1}}\eta\Big(\sum_{i=1}^n(p_iw_{i+n}-p_{i+n}w_i)\Big)^2\notag\\
		+ & \f12\pa_{x_{2n+1}}\eta\sum_{i=1}^{2n}\pa_{x_i}fw_i\sum_{j=1}^n(p_jw_{j+n}-p_{j+n}w_j)\notag\\
		+ & \f12\pa_{x_{2n+1}}f\sum_{i=1}^{2n}\pa_{x_i}\eta w_i\sum_{j=1}^n(p_jw_{j+n}-p_{j+n}w_j)
	\end{align}
	Integrate both sides of the identity \rf{eq5.8} and employ the identity \rf{eq5.7} in Lemma \ref{lem5.4}, we get the desired result. 
\end{proof}
\begin{rem}
The guiding principle of all the facts is simply that we view the approximating density\foo{The total measure $e_\eps d\mu$, the directional measure $e_\eps^jd\mu$, and even the signed Radon measure $d\nu=\f{\eta(y)-\eta(x)}\eps\f{f(y)-f(x)}{\eps}d\mu$.} as an approximating measure, and use the sub-partition lemma to show the convergence in an almost monotonous way.
\end{rem}
Combining the inequality \rf{inq4.22} and \rf{inq4.5}, we arrive at the following inequality 

\begin{align}\lab{inq4.25}
E^{u_\eta}(\psi)+E^{u_{1-\eta}}(\psi) & \leq E^{u_0}(\psi)+E^{u_1}(\psi)\notag\\
& - \int_\Om \psi X\eta Xf+\int_\Om \psi Q(\eta,X\eta).
\end{align}
We want to compare the energy of the corresponding maps using the minimal property of the maps $u_0$ and $u_1$. Thus we need to take supremum on both sides of the inequality \rf{inq4.25}, replace the test function $\eta$ by $t\cdot \eta$, and finally take the limit when $t\rightarrow0$. The second term in the second line of the inequality is negligible because it involves second order terms of the Lipschitz function $\eta$.  As a first step, we have 
\bel{inq4.26}
E^{u_\eta}+E^{u_{1-\eta}}\leq E^{u_0}+E^{u_1}-\sup_{\begin{subarray}{}0\leq \psi\leq 1 \\ \psi\in C_0(\Om)
\end{subarray}}\Big(\int_\Om \psi X\eta Xf - \int_\Om \psi Q(\eta,X\eta)\Big).
\qe
Using the minimizing property of the maps $u_0,u_1$, inequality \rf{inq4.26} implies
\[
\sup_{\begin{subarray}{}0\leq \psi\leq 1 \\ \psi\in C_0(\Om)
\end{subarray}}\Big(\int_\Om \psi X\eta Xf - \int_\Om \psi Q(\eta,X\eta)\Big)\leq0.
\]
Now replacing $\eta$ by $t\eta$ and letting $t\rightarrow0$, we obtain the following 
\[
\sup_{\begin{subarray}{}0\leq \psi\leq 1 \\ \psi\in C_0(\Om)
\end{subarray}}\int_\Om \psi X\eta Xf\leq0.
\]
Finally, we use an approximation argument. That is, denote 
\[
\Om_\eps=\set{x\in\Om|d_{cc}(x,\pa\Om)\geq\eps}.
\]
Choose some cut-off function $\psi_\eps$ satisfying 
\[
\psi_\eps=1\quad \text{in}\quad \Om_\eps,\quad \psi_\eps=0\quad \Om\setminus \Om_{\f\eps2}.
\]
\begin{rem}
Here of course we need the boundary $\pa\Om$ satisfying some certain condition, that is, we want the domain $\Om$ to shrink into a sub-domain such that there are some space between $\Om$ and $\Om_\eps$. In this way, we can construct the cut-off function $\psi_\eps$. Actually, we should assume that  the boundary is non-characteristic in the sense that for any point $x\in\Om$, there exists some vector $X_j(x)\notin T_x\pa\Om$. In this case, at least along the direction $X_j(x)$, the point x will leave the boundary towards the interior of $\Omega$.
\end{rem}
 
Therefore, we get a family of cut-off functions $\psi_\eps$ with compact support in $\Om$. A simple estimate gives that 
\begin{align}\lab{inq1.39}
\abs{\int_\Om\psi_\eps X\eta Xf-\int_\Om X\eta Xf} & \leq \int_{\Om\setminus \Om_\eps}\abs{1-\psi_\eps}\abs{X\eta}\abs{Xf}\notag\\
 & \leq \int_{\Om\setminus\Om_\eps}\abs{X\eta}\abs{Xf}.
\end{align}
We see that \rf{inq1.39} approaches zero, as the sub-domain $\Om_\eps$ gets closer to $\Om$ with $\eps\rightarrow0$.
Hence we conclude  
\bel{inq4.28}
\int_\Om X\eta Xf\leq 0,
\qe
which is the weak subsolution of the equation 
\[
Hf=0,
\]
where $H=-X^*X$ is the H\"ormander operator. Inequality \rf{inq4.28} is actually a weak form of the Bochner formula. 
\subsection{The main theorem}
In this section, we will present the proof of the main theorem whose statement was written in the introduction. Using the notation established throughout the paper, we re-state the theorem in more precise terms as follows.
\begin{thm}
Any energy minimising harmonic map $u:\H \longrightarrow N$, from the Heisenberg group to a non-positively curved space $N$ ($\operatorname{CAT}(0)$) is locally Lipschitz continuous. Moreover, we have the estimate 
\[
Lip_u(x)\leq \f C{\mu(B_r(x))}E^u
\]
where $C=C(n,N,r_0)$ is a positive constant depending only on the geometry of $N$. 
\end{thm}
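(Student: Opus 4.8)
The plan is to transplant the De Giorgi--Nash--Moser scheme from the smooth baby case of Section~3 to the singular setting, replacing the pointwise identity \rf{inq1.15} by the weak Bochner inequality \rf{inq4.28}. Fix a horizontal unit direction $\om\in V(x)$ and, for small $\eps>0$, let $u_\eps$ be the translate $u_\eps(x)=u(x\cdot\delta_\eps(\om))$, for which the displacement $d_{cc}(x,x\cdot\delta_\eps(\om))=\eps$ is uniform by the left-invariance recorded in Example~\ref{Heisenbergexample}. Taking $u_0=u$ and $u_1=u_\eps$ in the comparison scheme of Section~5, the function $f_\eps(x)=d^2(u(x),u_\eps(x))$ lies in $H^1(\Om,\R)$ and, by \rf{inq4.28}, is a nonnegative weak subsolution of the H\"ormander operator, $Hf_\eps\le 0$. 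The whole problem thereby collapses to a scalar sub-elliptic maximum principle for $f_\eps$, the CAT(0) hypothesis having been used only to fix the sign of this inequality.

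I would then run the Moser iteration for the subsolution $f_\eps$. On $\H$ the measure $\mu$ is doubling and the horizontal fields obey the sub-Riemannian Sobolev and Poincar\'e inequalities; inserting these into the usual iteration for nonnegative subsolutions yields the local maximum principle
\bel{localmax}
\sup_{B_{r/2}(x_0)}f_\eps\le \f{C}{\mu(B_r(x_0))}\int_{B_r(x_0)}f_\eps\,d\mu,
\qe
with $C=C(n,N,r_0)$ depending only on the doubling and Poincar\'e constants and, crucially, independent of $\eps$.

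It remains to divide \rf{localmax} by $\eps^2$ and let $\eps\to0$. On the right the integrand $\eps^{-2}f_\eps$ is the approximate directional energy density, which by the absolute continuity of the directional energy proved in Section~4 converges to $\abs{u_*(\om)}^2$; since the directional energy is dominated by the total one, $\int_{B_r(x_0)}\abs{u_*(\om)}^2\,d\mu\le C(n)E^u$. On the left the pointwise convergence \rf{inq4.15} gives $\eps^{-2}f_\eps(x_0)\to\abs{u_*(\om)}^2(x_0)$ for a.e. $x_0$. Together these yield $\abs{u_*(\om)}^2(x_0)\le C\mu(B_r(x_0))^{-1}E^u$ for a.e. $x_0$ and every horizontal unit $\om$. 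Finally, since the variation of $u$ along any horizontal curve is controlled by the densities $\abs{u_*(\om)}$ and the cc-distance is the infimal length of such curves, passing to the supremum over $\om$ bounds the local Lipschitz constant of $u$ and establishes the quantitative estimate.

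The main obstacle, I expect, is less the iteration itself than its two limiting inputs. First, the constant in \rf{localmax} must be genuinely uniform as the comparison scale $\eps\to0$; this is where the homogeneous, left-invariant geometry of $\H$ is indispensable, since it keeps the doubling and Poincar\'e constants stable. Second, the interchange of $\eps\to0$ with the supremum over $B_{r/2}(x_0)$ requires an honest pointwise limit of $\eps^{-2}f_\eps$, which is exactly why Section~4 was arranged to produce \rf{inq4.15} and not merely weak convergence of the energy measures. The subtlest point, however, sits upstream of \rf{inq4.28}: one must verify that the translate $u_\eps$ is still energy minimising, which is delicate precisely because the vertical derivative of $u$ is not controlled by the horizontal energy. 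This is the symmetry of $\H$ isolated in Lemma~\rf{lem5.3}, and it is the single place where the argument truly depends on the domain being the Heisenberg group.
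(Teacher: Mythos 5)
Your proposal follows the paper's own proof in all essentials: take $u_1$ to be a small horizontal translate of $u$, use the weak Bochner inequality \rf{inq4.28} to conclude that $f_\eps=d^2(u,u_\eps)$ is a nonnegative weak subsolution of the H\"ormander operator, run De Giorgi--Nash--Moser to control $f_\eps$ at the center point by its ball average, bound that average by $\eps^2 E_\eps^u/\mu(B_{r_0})$ using polynomial volume growth, and let $\eps\to0$ (the paper keeps the finite-difference bound rather than passing through the a.e.\ directional densities $\abs{u_*(\om)}^2$, but this is a cosmetic difference). One correction to your closing remarks: Lemma~\ref{lem5.3} is \emph{not} about the energy minimality of the translate $u_\eps$ --- that follows directly from left-invariance of $d_{cc}$ and of the measure; its actual role, via the moment identities of Lemma~\ref{lem5.4}, is to convert the ball-averaged difference-quotient cross term into the horizontal gradient term $\int_\Om\psi\, X\eta\cdot Xf$, i.e.\ to bridge the gap between minimality of the \emph{total} energy and the \emph{directional} inequality, which is the step upstream of \rf{inq4.28} where the symmetry of $\H$ is genuinely needed.
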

\begin{proof}
We now specify the maps: $u_0(x)=u(x)$, $u_1(x)=u(x+\eps\om_j)=u_j(x)$, and $\om_j\in V(x)$ in the inequality \rf{inq4.28}. We then conclude that the difference function $f(x)=d^2(u(x),u(x+\om_j))$ is a weak sub-solution and hence by the De Giorgi-Nash-Moser iteration, 
there exists some positive constant $C,r_0$, such that
\bel{inq5.1}
d^2(u(x),u(x+\eps\om_j))\leq C\fint_{B_r(x)}d^2(u(y),u(y+\eps\om_j))d\mu(y)
\qe
for any $0<r<r_0$ and $x\in\Om, dist(x,\pa\Om)>r_0$. We may rewrite inequality \rf{inq5.1} as
\begin{align*}
d^2(u(x),u(x+\eps\om_j))\leq & C\eps^2 \fint_{B_r(x)}\f{d^2(u(y),u(y+\eps\om_j))}{\eps^2}d\mu(y)\\
\leq & C\eps^2\f1{\mu(B_r(x))}E_\eps^u\\
\leq & C\eps^2\f1{\mu(B_{r_0}(x))}E_\eps^u.
\end{align*}
But the volume of the ball is of polynomial growth, so we can compare the volumes of of $B_r(x)$ and $B_{r_0}(x)$. Dividing both sides by $\eps^2$ and letting $\eps\rightarrow0$, we then conclude that the map $u$ is Lipschitz continuous and 
\[
Lip_u(x)\leq  CE^u,
\]
giving the desired estimate.
\end{proof}

\bibliographystyle{plain}
\bibliography{refoflip}

\date{November 2021}

\end{document}